\DeclareMathOperator*{\var}{Var }
\renewcommand*\subjclass[2][2000]{%
  \def\@subjclass{#2}%
  \@ifundefined{subjclassname@#1}{%
    \ClassWarning{\@classname}{Unknown edition (#1) of Mathematics
      Subject Classification; using '1991'.}%
  }{%
    \@xp\let\@xp\subjclassname\csname subjclassname@#1\endcsname
  }%
}
\newtheorem{theorem}{Theorem}[section]
\newtheorem{lemma}[theorem]{Lemma}
\newtheorem{corollary}[theorem]{Corollary}
\newtheorem{proposition}[theorem]{Proposition}
\theoremstyle{definition}
\newtheorem{definition}[theorem]{Definition}
\newtheorem{example}[theorem]{Example}
\newtheorem{remark}[theorem]{Remark}
\numberwithin{equation}{section}
\renewcommand*\subjclass[2][2000]{%
  \def\@subjclass{#2}%
  \@ifundefined{subjclassname@#1}{%
    \ClassWarning{\@classname}{Unknown edition (#1) of Mathematics
      Subject Classification; using '1991'.}%
  }{%
    \@xp\let\@xp\subjclassname\csname subjclassname@#1\endcsname
  }%
}
\renewcommand{\Re}{{\rm Re}}       
\renewcommand{\Im}{{\rm Im}}       
\def\NABLA#1{{\mathop{\nabla\kern-.5ex\lower1ex\hbox{$#1$}}}}
\def\Nabla#1{\nabla\kern-.5ex{}_{#1}}
\def\Tabla#1{\Tilde\nabla\kern-.5ex{}_{#1}}
\renewcommand{\Tilde}{\widetilde}
\begin{document}

\title{Charatheodory and Smirnov type theorem for harmonic mappings}
\subjclass{Primary 30C55, Secondary 31C05}

\keywords{Harmonic Mappings, Harmonic surfaces, Isoperimetric
inequality}

\author{David Kalaj}
\address{Faculty of Natural Sciences and Mathematics, University of Montenegro, Cetinjski put b.b., 81000 Podgorica, Montenegro} \email{davidk@ac.me}

\author{Marijan Markovi\'c}
\address{Faculty of Natural Sciences and Mathematics, University of Montenegro, Cetinjski put b.b., 81000 Podgorica, Montenegro}
\email{marijanmmarkovic@gmail.com}

\author{Miodrag Mateljevi\'c}
\address{Faculty of Mathematics, University of Belgrade,  Studentski trg 16, 11000 Belgrade, Serbia}
\email{miodrag@matf.bg.ac.rs}

\begin{abstract} We prove a version of Smirnov type theorem and Charatheodory type theorem for
a harmonic homeomorphism of the unit disk onto a Jordan surface with
rectifiable boundary.  Further we establish the classical
isoperimetric inequality and Riesz--Zygmund inequality for Jordan
harmonic surfaces without any smoothness assumptions of the
boundary.
\end{abstract}

\maketitle

\section{Introduction}
Throughout this paper $n\ge 1$ will be an integer. By
$\left<\cdot,\cdot\right>$ and $|\cdot|$ are denoted the standard
inner product and Euclidean norm in the space $\mathbb {R}^n$. In
particular $\mathbb{C}^n= \mathbb{R}^{2n}$, where
$\mathbb{C}=\mathbb{R}^2$ is the complex plane. By
$\mathbb{U}=\{z=x+iy\in\mathbb{C}:|z|<1\}$ we denote the unit disk
and by $\mathbb{T}=\{\zeta\in \mathbb{C}:|\zeta|=1\}$ is denoted the
unit circle in the complex plane.

Let $f=(f^1,\dots,f^n):\mathbb{U}\to\mathbb {R}^n$ be a continuous
mapping defined in the unit disc having partial derivatives of first
order in $\mathbb{U}$. The formal derivative (Jacobian matrix) of
$f$ is defined by
$$\nabla f=\left(
\begin{array}{cc}
f^1_x &  f^1_y \\
\vdots &  \vdots \\
f^n_x &  f^n_y \\
\end{array}
\right).$$ Jacobian determinant of $\nabla f$ is defined by
$$J_f=\left(\det [\nabla f^T\cdot \nabla f]\right)^{1/2}=\sqrt{|f_x|^2|f_y|^2-\left<f_x,f_y\right>^2}.$$
A mapping $f=(f^1,\dots,f^n):\mathbb{U}\to\mathbb{R}^n$ is called
harmonic if $f^j,\ j=1,\dots,n$ are harmonic functions in
$\mathbb{U}$, that is if $f^j$ is twice differentiable and satisfies
the Laplace equation
$$\Delta f^j  \equiv  0,\ j=1,\dots,n.$$
Let
$$P(r,t)=\frac{1-r^2}{2\pi (1-2r \cos t+r^2)},\quad 0\le r<1,\ 0\le t\le 2\pi$$
denote the Poisson kernel for the disc $\mathbb{U}$. It is well
known that every bounded harmonic mapping
$f:\mathbb{U}\to\mathbb{R}^n$ has the representation as Poisson
integral
\begin{equation}\label{e:POISSON}
f(z)=P[F](z)=\int_0^{2\pi}P(r,t-\theta)F(e^{it})\ dt,\quad
z=re^{i\theta}\in\mathbb {U},
\end{equation}
where $F:\mathbb{T}\to\mathbb{R}^n$ is a measurable and bounded in
the unit circle.

A homeomorphic image of the unit circle $\mathbb{T}$ in $\mathbb
{R}^n$ is called a Jordan curve. A Jordan surface $\Sigma\subseteq
\mathbb{R}^n$ is a homeomorphic image of the closed unit disk, i.e.
$\Sigma=\Phi(\overline{\mathbb{U}})$, where $\Phi$ is a
homeomorphism. We say that $\Sigma$ is spanned by the Jordan curve
$\Gamma=\partial\Sigma= \Phi(\mathbb T)$. It will be always assumed
that $\Gamma$ is at least rectifiable; we denote by $|\Gamma|$ its
length. The surface $\Sigma\subseteq \mathbb {R}^n$ is regular if
$\Sigma= \tau(\mathbb{U})$, where $\tau=\tau(x,y)$ is a $C^1$
injective mapping, with positive Jacobian  $J_\tau$ in $\mathbb{U}$.
 Thus the tangent vectors
$\tau_x,\ \tau_y $ are linearly independent for $z=x+iy\in\mathbb
{U}$ or equivalently the Jacobian matrix $\nabla f$ has full rank
$2$ in $\mathbb{U}$. The mapping $\tau$ is called parametrization of
$\Sigma$. Certainly, it is not unique. The area $|\Sigma|$ of the
surface $\Sigma$ equals
$$ \left| \Sigma\right|  =  \int_\mathbb{U}  J_\tau \ dA,$$
where $dA(z)=dxdy$ is Lebesgue measure in the complex plane.

We call a Jordan surface $\Sigma\subseteq\mathbb{R}^n$ a
simple--connected harmonic surface if there exist a homeomorphic
harmonic mapping $\tau: \mathbb{U}\to\Sigma$ (it need not have a
homeomorphic extension to $\overline{\mathbb{U}}$). Let us point out
that $\tau$ need not be a regular parametrization of $\Sigma$, i.e.
the strict inequality
$$J_\tau=\sqrt{|\tau_x|^2 |\tau_y|^2-\left<\tau_x,\tau_y\right>^2}>0\ \text{in the disc}\ \mathbb{U} $$
need not hold except in the planar case (in view of Lewy's theorem,
see \cite{l}). In other words, we allow that the harmonic surfaces
have branch points, i.e. the points with zero Jacobian.

Together with this introduction, the paper contains two more
sections. In the second section it is proved that a harmonic mapping
of the unit disk onto a Jordan surface has BV extension onto the
boundary. In addition it is proved the Smirnov theorem for harmonic
mappings of the unit disk onto a Jordan surface which assert that,
the angular derivative of a harmonic homeomorphism $h$ belong to the
Hardy class $h^1(\mathbf U)$ if and only if the boundary of the
surface is rectifiable. In the third section it is proved the
isoperimetric inequality for harmonic surfaces. More precisely, if
$A$ is the area of a Jordan harmonic surface $\Sigma$ and $L$ is its
circumference, then there hold the inequality $4\pi A\le L^2$. This
results is not surprising, and it can be find in the literature in
various formulations, but we believe that our inequality contains
some new information regarding the isoperimetric inequality,
especially because it contains the optimal relaxing condition of
smoothness of boundary. We finish the paper by establishing the
Riesz-Zygmund inequality which implies that the perimeter of a
harmonic surface is bigger than two "diameters".

\section{Carath\'eodory and Smirnov theorem for harmonic mappings}
Recall that a real-valued (or more generally complex-valued)
function $f$ on the real line is said to be of bounded variation
($BV$ function) on a chosen interval $[a, b]\subset \mathbb{R}$ if
its total variation $V_a^b(f)$  is finite, i.e. $f\in BV([a,b])$ if
and only if $V_a^b(f) <\infty$. The graph of a function having this
property is well behaved in a precise sense. A characterization
states that the functions of bounded variation on a closed interval
are exactly those $f$ which can be written as a difference $g - h$,
where both $g$ and $h$ are bounded monotone: this result is known as
the Jordan decomposition. Moreover, if $f$ is absolutely continuous
on $[a,b]\subset \mathbb{R}$, then $V^a_b(f) = \int _a^b |f'(t)|\
dt$.
\begin{theorem}[Helly selection theorem, \cite{natanson}]\label{helly}
Let $(\varphi_n)$ be a sequence of uniformly bounded functions of
uniformly bounded variation on a segment $[a,b]$. Then there exists
a subsequence $(\varphi_{n_k})\subseteq(\varphi_n)$ such that
$\varphi_{n_k}(x)\rightarrow\varphi(x)$ for every $x\in [a,b]$ and
$\varphi$ is of bounded variation. Moreover if all of $\varphi_n$
are monotone increasing (or decreasing), then so is $\varphi$.
\end{theorem}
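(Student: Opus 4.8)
The plan is to reduce the general case to the monotone case by means of the Jordan decomposition, to settle the monotone case by a Cantor diagonal extraction over a countable dense set, and then to reassemble.

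\emph{Monotone case.} Assume first that every $\varphi_n$ is monotone increasing with $|\varphi_n(x)|\le M$ on $[a,b]$. Fix an enumeration $q_1,q_2,\dots$ of the countable set $(\mathbb Q\cap[a,b])\cup\{a,b\}$. By the Bolzano--Weierstrass theorem, extract a nested family of subsequences of $(\varphi_n)$, the $j$-th one converging at $q_j$, and let $(\psi_k)$ be the diagonal subsequence; then $\ell(q):=\lim_k\psi_k(q)$ exists for every $q$ in the countable set and $\ell$ is increasing there. Set $\varphi(x):=\sup\{\ell(q):q\le x\}$, which is monotone increasing and bounded by $M$. For rationals $q'<x<q''$ the inequalities $\psi_k(q')\le\psi_k(x)\le\psi_k(q'')$ pass to the limit and give $\ell(q')\le\liminf_k\psi_k(x)\le\limsup_k\psi_k(x)\le\ell(q'')$; letting $q'\uparrow x$ and $q''\downarrow x$, the continuity of $\varphi$ at $x$ forces $\psi_k(x)\to\varphi(x)$ at every continuity point. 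The discontinuity set $D$ of the monotone function $\varphi$ is at most countable, so a second diagonal extraction from $(\psi_k)$ yields a subsequence converging at every point of $D$ too; redefining $\varphi$ on $D$ as these limits preserves monotonicity (a pointwise limit of increasing functions is increasing), and we obtain a subsequence of $(\varphi_n)$ converging to a monotone $\varphi$ at every point of $[a,b]$. This proves the ``moreover'' clause.

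\emph{General case.} Write $\varphi_n=g_n-h_n$ with $g_n(x)=V_a^x(\varphi_n)$ the variation function and $h_n=g_n-\varphi_n$; both are monotone increasing. If $V_a^b(\varphi_n)\le V$ for all $n$, then $0\le g_n\le V$, and together with $|\varphi_n|\le M$ this gives $|h_n|\le M+V$, so $(g_n)$ and $(h_n)$ are uniformly bounded monotone sequences. Apply the monotone case to $(g_n)$ to obtain a subsequence with $g_{n_k}\to g$ pointwise ($g$ increasing), and apply it again along that subsequence to $(h_{n_k})$ to obtain a further subsequence with $h_{n_{k_j}}\to h$ pointwise ($h$ increasing). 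Then $\varphi_{n_{k_j}}\to g-h=:\varphi$ pointwise, and $\varphi$ is of bounded variation as a difference of monotone functions; moreover for any partition $a=x_0<\dots<x_m=b$ one has $\sum_i|\varphi(x_{i+1})-\varphi(x_i)|=\lim_j\sum_i|\varphi_{n_{k_j}}(x_{i+1})-\varphi_{n_{k_j}}(x_i)|\le V$, so in fact $V_a^b(\varphi)\le\sup_n V_a^b(\varphi_n)$.

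The one genuinely delicate point is the treatment of the discontinuity set $D$ in the monotone case: the first diagonal argument only guarantees convergence at continuity points, and the second extraction is what upgrades this to convergence everywhere. The rest --- the uniform bounds after the Jordan decomposition and the lower semicontinuity of total variation under pointwise limits --- is routine bookkeeping.
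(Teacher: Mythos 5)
The paper does not prove this statement: it is quoted as the Helly selection theorem with a citation to Natanson's textbook, so there is no in-paper argument to compare yours against. Your proof is correct and is the classical one --- Jordan decomposition $\varphi_n=V_a^{x}(\varphi_n)-\bigl(V_a^{x}(\varphi_n)-\varphi_n\bigr)$ to reduce to uniformly bounded monotone sequences, a Cantor diagonal over a countable dense set, and a second extraction to secure convergence on the countable discontinuity set of the limit --- which is essentially the proof in the cited reference; the one delicate step (convergence only at continuity points after the first diagonal) is exactly the one you identify and repair correctly.
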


\begin{lemma}\label{vaz} Let $f:\mathbb{U} \rightarrow \Sigma$ be a
harmonic homeomorphism of the unit disk onto a Jordan surface
$\Sigma$ with rectifiable boundary $\Gamma$. Then there exists a
function $\phi: \mathbb{T}\rightarrow \Gamma$ with bounded variation
and with at most countable set of points of discontinuity, where it
has the left and the right limit such that $f = P[\phi]$.
\end{lemma}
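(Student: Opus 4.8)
The plan is to realise $f$ as the Poisson integral of its radial boundary values and then, using only the topological fact that $f$ is a homeomorphism together with the rectifiability of $\Gamma$, to show these boundary values have bounded variation; a routine regularization then produces $\phi$.

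\emph{Poisson representation and location of boundary values.} Since $\Sigma$ is compact, $f(\mathbb{U})$ is bounded, so $f$ is a bounded harmonic mapping and by \eqref{e:POISSON} we have $f=P[F]$ for a bounded measurable $F:\mathbb{T}\to\mathbb{R}^n$; applying Fatou's theorem to each component, $F(\zeta)=\lim_{r\to1^-}f(r\zeta)$ for almost every $\zeta\in\mathbb{T}$. Call such $\zeta$ \emph{good}; good points form a set of full measure, hence a dense subset of $\mathbb{T}$. I first check $F(\zeta)\in\Gamma$ for every good $\zeta$. If instead $F(\zeta_0)=f(z_0)$ for some $z_0\in\mathbb{U}$, choose a ball $V\Subset\mathbb{U}$ about $z_0$; since $f$ is a homeomorphism, $f(V)$ is a relative neighbourhood of $f(z_0)$ in $f(\mathbb{U})$, so $f(r\zeta_0)\in f(V)$, i.e.\ $r\zeta_0\in V$, for all $r$ close to $1$, which is impossible because $\sup_{z\in V}|z|<1$. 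Thus $F(\zeta_0)\in\overline{f(\mathbb{U})}\setminus f(\mathbb{U})=\Gamma$ (recall $f$ maps $\mathbb{U}$ homeomorphically onto $\Sigma\setminus\Gamma$).

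\emph{Order of boundary values and the variation bound (the heart of the proof).} Fix good points $\zeta_1,\dots,\zeta_m$ lying on $\mathbb{T}$ in counter-clockwise cyclic order; I claim $F(\zeta_1),\dots,F(\zeta_m)$ run monotonically once around $\Gamma$, that is, for an arc-length parametrization $\gamma$ of $\Gamma$ there are parameters $t_1\le t_2\le\dots\le t_m<t_1+|\Gamma|$ with $F(\zeta_j)=\gamma(t_j)$. For a quadruple $\zeta_i,\zeta_j,\zeta_k,\zeta_l$ with $i<j<k<l$, the radial images $f([0,\zeta_i])$ and $f([0,\zeta_k])$ form a simple arc $A\subset f(\mathbb{U})$ whose only cluster points on $\Gamma$ are $F(\zeta_i)$ and $F(\zeta_k)$ (since the two radial limits exist); transporting through $f$ the fact that $[0,\zeta_i]\cup[0,\zeta_k]$ splits $\mathbb{U}$ into two components, $A$ splits $f(\mathbb{U})$ into two components $G_j\ni f(r\zeta_j)$ and $G_l\ni f(r\zeta_l)$, whence $F(\zeta_j)\in\overline{G_j}$, $F(\zeta_l)\in\overline{G_l}$. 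Because $\overline{G_j}\cup\overline{G_l}\supseteq\Gamma$ while $\overline{G_j}\cap\overline{G_l}\cap\Gamma\subseteq\{F(\zeta_i),F(\zeta_k)\}$, each of the two arcs of $\Gamma$ determined by $F(\zeta_i),F(\zeta_k)$ lies in one of $\overline{G_j},\overline{G_l}$, so $\{F(\zeta_i),F(\zeta_k)\}$ separates $F(\zeta_j)$ from $F(\zeta_l)$ on $\Gamma$ (or one of these points collapses onto $F(\zeta_i)$ or $F(\zeta_k)$). Since this holds for every quadruple, the cyclic order is monotone, as claimed. Consequently, for any partition $0=s_0<s_1<\dots<s_p=2\pi$ with all $e^{is_k}$ good, $\sum_{k=0}^{p-1}\bigl|F(e^{is_{k+1}})-F(e^{is_k})\bigr|$ is dominated by the sum of the arclengths of $\Gamma$ between consecutive points $F(e^{is_k})$, which equals $|\Gamma|$ as these points run monotonically once around $\Gamma$. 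Taking the supremum over such (dense) partitions gives $V_0^{2\pi}(F)\le|\Gamma|<\infty$.

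\emph{Regularization and the obstacle.} Let $W(t)$ be the supremum of $\sum_k|F(e^{is_{k+1}})-F(e^{is_k})|$ over partitions of $[0,t]$ by good points; $W$ is non-decreasing and bounded by $|\Gamma|$, hence has one-sided limits everywhere and at most countably many discontinuities, and $|F(e^{it})-F(e^{is})|\le W(t)-W(s)$ for good $s<t$. This estimate shows that $\phi(e^{it}):=\lim_{s\downarrow t,\ s\ \mathrm{good}}F(e^{is})$ exists for every $t$ (the difference quotients are Cauchy, controlled by $W$), and defines a right-continuous $\phi:\mathbb{T}\to\mathbb{R}^n$ of bounded variation, with left limits everywhere, continuous off the countable jump set of $W$, and with values in $\Gamma$ since $\Gamma$ is closed; moreover $\phi=F$ at every good $t$ that is a continuity point of $W$, hence $\phi=F$ a.e., so $P[\phi]=P[F]=f$. (Helly's theorem, Theorem~\ref{helly}, gives an alternative route to this last step.) The main obstacle is the cyclic-order claim in the middle paragraph: with no smoothness of $\Gamma$ one must argue purely topologically, using the separation properties of the simple arcs $f([0,\zeta])$ in $f(\mathbb{U})$ and the structure of their cluster sets on $\Gamma$, and carefully treat the degenerate configurations in which several of the $F(\zeta_j)$ coincide.
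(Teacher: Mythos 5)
Your route is genuinely different from the paper's. The paper pulls $f$ back to a self-homeomorphism $F=\Phi^{-1}\circ f$ of $\mathbb{U}$, exhausts by $\Delta_n=F^{-1}(\{|z|<\tfrac{n-1}{n}\})$, uses Riemann maps $g_n$ to manufacture homeomorphisms $F_n$ of $\overline{\mathbb{U}}$ whose boundary traces are monotone circle maps, extracts a monotone limit $\varphi_0$ via Helly's selection theorem (Theorem~\ref{helly}), and sets $\phi=\Phi\circ\varphi_0$, which is of bounded variation because $\Phi|_{\mathbb{T}}$ is BV by Scheeffer's theorem and $\varphi_0$ is monotone. You instead work directly with the Fatou radial limits and derive cyclic monotonicity of the boundary values from a separation argument inside $f(\mathbb{U})$. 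If completed, your argument is arguably preferable: it avoids Helly, Scheeffer, and the delicate final step $g_{n_k}\to\mathrm{id}$, and it yields the explicit quantitative bound $V(\phi)\le|\Gamma|$, which the paper in fact needs later (in Lemma~\ref{gjat}).

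However, the step you yourself flag as the main obstacle is a genuine gap as written: the inclusion $\overline{G_j}\cap\overline{G_l}\cap\Gamma\subseteq\{F(\zeta_i),F(\zeta_k)\}$ does not follow merely from the facts that $A$ separates $f(\mathbb{U})$ into $G_j$ and $G_l$ and that $\overline{A}\cap\Gamma=\{F(\zeta_i),F(\zeta_k)\}$. A priori a point $p\in\Gamma$ far from those two cluster points could be approached by both $G_j$ and $G_l$ through two disjoint ``channels'' of $f(\mathbb{U})$ avoiding $A$, and nothing you wrote excludes this. What rescues the claim is that $\Sigma\setminus\Gamma$ is uniformly locally connected in the ambient metric of $\mathbb{R}^n$: the open disk $\mathbb{U}$ is uniformly locally connected inside $\overline{\mathbb{U}}$, and the defining homeomorphism $\Phi:\overline{\mathbb{U}}\to\Sigma$ together with its inverse is uniformly continuous by compactness, so the property transfers to $\Sigma\setminus\Gamma=f(\mathbb{U})$. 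Granting this, pick $\rho>0$ with $\overline{B(p,\rho)}\cap\overline{A}=\emptyset$ and then $\delta>0$ so that any two points of $f(\mathbb{U})$ at mutual distance less than $\delta$ lie in a connected subset of $f(\mathbb{U})$ of diameter less than $\rho/2$; a connected set joining a point of $G_j\cap B(p,\delta/2)$ to a point of $G_l\cap B(p,\delta/2)$ then lies in $B(p,\rho)\cap f(\mathbb{U})\subseteq f(\mathbb{U})\setminus A=G_j\sqcup G_l$ yet meets both components, a contradiction. With this lemma inserted, and with the routine but necessary bookkeeping you defer (that weak four-point cyclic separation for all quadruples forces a weakly monotone cyclic map, including the degenerate configurations where several $F(\zeta_j)$ coincide), your proof closes.
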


\begin{proof} Let $\Phi:\mathbb{U}\to\Sigma$ be a homeomorphism
which has extension on $\overline{\mathbb {U}}$ and let
$g=\Phi^{-1}$ be a homeomorphism of $\Sigma$ onto $\mathbb{U}$. The
function $F = g\circ f: \mathbb{U} \rightarrow \mathbb{U}$ is also a
homomorphism. Let $U_n = \{z: |z| < \frac{n-1}n\}$ and $\Delta_n =
F^{-1}(U_n)$ and let $g_n$ be a Riemann conformal mapping of
$\mathbb{U}$ onto the domain $\Delta_n$ such that $g_n(0)=0$ and
$g_n'(0)>0$. Assume w.l.g. that $0\in \Delta_n$. Then the function
$$F_n=\frac n{n-1} (F\circ g_n) = \frac n{n-1} (g \circ f \circ g_n ): \overline{\mathbb{U}} \rightarrow \overline{\mathbb{U}}$$
is a homeomorphism.

Then $\varphi_n(e^{i\theta})=e^{i\phi_n(\theta)}$ such that
$\phi_n(\theta)$ is a monotone function. Let $\varphi_n =
F_n|_\mathbb{T} $ and assume that $(\varphi_{n_k})$ is a convergent
subsequence $(\varphi_n)$ provided to us by Lemma \ref{helly}. Let
$\varphi_0 = \lim \varphi_{n_k}$. Then $\varphi_0$ is monotone.
Therefore
$$\frac{n_k}{n_k-1}(g\circ f\circ g_{n_k})|_\mathbb {T} \to \varphi_0.$$
It follows that
$$\lim_{k\to\infty}(f\circ g_{n_k})(e^{i\theta}) = \Phi(\varphi_0(e^{i\theta})) \mbox{ for every } \theta$$
because  $g=\Phi^{-1}$ is a homeomorphism $\overline{\Sigma}$ onto
$\overline {\mathbb{U}}$. Since $\Gamma$ is a rectifiable curve by
Scheeffer's theorem (\cite{acta}), the function $\Phi$ has bounded
variation in $\mathbb T$. Since $\varphi_0$ is monotone, it follows
that the mapping $\phi(e^{i\theta}) = \Phi(\varphi_0(e^{i\theta}))$
has bounded variation.

Since $\phi_k=(f\circ g_{n_k})|_\mathbb{T}$ is continuous and
$f\circ g_{n_k}$ is harmonic, according to Lebesgue Dominated
Convergence Theorem, because $g^{-1}\circ\varphi_0$ is bounded we
obtain
$$\lim_{k\to \infty}f \circ g_{n_k}=\lim_{k\to \infty}P[\phi_k] = P[\Phi \circ \varphi_0].$$

It follows that the sequence $g_{n_k}$ converges. Let
$g_0(z)=\lim_{k\to\infty} g_{n_k}(z)$. Since $g_0$ is a conformal
mapping of the unit disk onto itself such that $g_0(0)=0$ and
$g_0'(0)>0$, it follows that $g_0=id$. Therefore $f=P[\phi]$, where
$\phi=\Phi\circ\varphi_0$. Since $\Phi$ is continuous and
$\varphi_0$ is monotone, the mapping $\phi$ is continuous except in
a countable set of points $X$ where it has the left and the right
limit.
\end{proof}

The following two propositions are well-known. For the first one see
e.g. \cite[Section 1.4]{duren}.

\begin{proposition}\label{seg} Let $f:\mathbb{U}\rightarrow\Sigma$ be
a harmonic mapping of the unit disk onto the Jordan surface
$\Sigma$. If $f = P[\phi]$ and if for some $\theta_0\in [0,2\pi]$
holds
$$\lim_{\theta \uparrow \theta_0}\phi(e^{i\theta})=A_0$$
and
$$\lim_{\theta \downarrow \theta_0}\phi(e^{i\theta})=B_0,$$
then for $\lambda\in [-1,1]$ and a Jordan arc
$\Gamma_\lambda(s)\subseteq \mathbb U$, $0\le s < 1$ emanating at
$\Gamma_\lambda(1) = e^{i\theta}$ and forming the angle
$-\frac{\pi\lambda}{2}$ with $e^{i\theta}$ we have:
$$\lim_{s\to 1^-}f(\Gamma_\lambda(s)) = \frac 12(1-\lambda)A_0+\frac 12(1+\lambda)B_0.$$
\end{proposition}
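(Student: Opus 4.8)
The plan is to reduce the statement to the classical radial/angular limit theory for the Poisson integral of a single bounded function with a jump discontinuity. Recall that for a bounded function $\phi$ on $\mathbb{T}$ having one-sided limits $A_0$ and $B_0$ at $e^{i\theta_0}$, the harmonic extension $P[\phi]$ has the property that its limit along any Jordan arc $\Gamma_\lambda$ ending at $e^{i\theta_0}$ under the stated angle $-\tfrac{\pi\lambda}{2}$ equals the convex combination $\tfrac12(1-\lambda)A_0+\tfrac12(1+\lambda)B_0$; this is precisely the content of the cited material in \cite[Section 1.4]{duren}, stated there for $n=1$. So the whole game is to transfer this scalar result to the vector-valued setting and to the particular geometry here.

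First I would observe that $P[\phi]$ acts componentwise: writing $\phi=(\phi^1,\dots,\phi^n)$ and $A_0=(A_0^1,\dots,A_0^n)$, $B_0=(B_0^1,\dots,B_0^n)$, we have $f=(P[\phi^1],\dots,P[\phi^n])$, and the hypothesis $\lim_{\theta\uparrow\theta_0}\phi(e^{i\theta})=A_0$, $\lim_{\theta\downarrow\theta_0}\phi(e^{i\theta})=B_0$ means exactly that each scalar component $\phi^j$ has left limit $A_0^j$ and right limit $B_0^j$ at $e^{i\theta_0}$. Applying the one-variable result of \cite[Section 1.4]{duren} to each $\phi^j$ gives $\lim_{s\to1^-}P[\phi^j](\Gamma_\lambda(s))=\tfrac12(1-\lambda)A_0^j+\tfrac12(1+\lambda)B_0^j$, and assembling the $n$ coordinates yields the claimed vector identity. (One mild point to check: the parametrization $\Gamma_\lambda$ is only required to be a Jordan arc approaching $e^{i\theta_0}$ with the prescribed asymptotic angle, not a straight segment; but the classical theorem is already formulated for non-tangential approach through a Stolz-type region, and more precisely for approach at a fixed angle, so the arc version follows by noting that $\Gamma_\lambda(s)$ eventually lies in any neighborhood of the ray and the Poisson kernel estimates controlling the limit depend only on the angle of approach.)

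The only genuinely delicate step is justifying why $\phi$ is bounded and why the one-sided limits at $e^{i\theta_0}$ actually exist in the first place — but here this is handed to us: the proposition is stated under the standing assumption that $f=P[\phi]$ where $\phi:\mathbb{T}\to\Gamma$ is exactly the boundary function produced by Lemma \ref{vaz}, which is of bounded variation and has one-sided limits at every point, with at most countably many discontinuities. In particular $\phi$ takes values in the bounded set $\Gamma$, so $P[\phi]$ is a well-defined bounded harmonic mapping, and the hypotheses on $A_0,B_0$ are automatically meaningful. So I would state at the outset that boundedness and existence of one-sided limits are inherited from Lemma \ref{vaz}, then carry out the componentwise reduction above, and finally invoke the scalar Fatou/Lindel\"of-type result from Duren. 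I expect no real obstacle; the entire argument is a packaging of the classical one-dimensional statement, and the main thing to be careful about is stating precisely which classical theorem is being used (approach at a fixed angle through a Stolz-type sector, with the value being the indicated affine interpolation between the two one-sided boundary values).
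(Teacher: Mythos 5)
Your proposal is correct and matches the paper's intent exactly: the paper gives no proof of Proposition~\ref{seg} at all, declaring it well-known and simply citing \cite[Section 1.4]{duren}, and your componentwise reduction to that classical scalar jump-discontinuity result (with boundedness and one-sided limits supplied by Lemma~\ref{vaz}) is precisely the argument that citation is standing in for.
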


\begin{proposition} If $f = P[\phi]$ and if $\phi$ is continuous at $\zeta
\in\mathbb T$, then $f$ has continuous extension on
$\zeta\cup\mathbb{U}$.
\end{proposition}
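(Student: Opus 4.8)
The plan is to use the standard fact that the Poisson kernel family $\{P(r,\cdot)\}_{0\le r<1}$ is a positive approximate identity concentrating its mass near $t=0$, combined with the boundedness of $\phi$ (which holds because $\Gamma$, being a Jordan curve, is compact). Write $\zeta=e^{i\tau}$. Since $f(z)=\int_0^{2\pi}P(r,t-\theta)\phi(e^{it})\,dt$ and $\int_0^{2\pi}P(r,t-\theta)\,dt=1$, for $z=re^{i\theta}\in\mathbb U$ we have
$$f(z)-\phi(\zeta)=\int_0^{2\pi}P(r,t-\theta)\bigl(\phi(e^{it})-\phi(\zeta)\bigr)\,dt.$$
It then suffices to show the right-hand side tends to $0$ as $z\to\zeta$ within $\mathbb U$; setting $f(\zeta):=\phi(\zeta)$ gives the desired continuous extension to $\zeta\cup\mathbb U$, continuity on $\mathbb U$ being automatic from harmonicity.

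Fix $\ep>0$. By continuity of $\phi$ at $\zeta$ choose $\delta>0$ so that $|\phi(e^{it})-\phi(\zeta)|<\ep$ whenever $|t-\tau|<\delta$. Split the integral as $I_1+I_2$ over $E_\delta=\{t:|t-\tau|<\delta\}$ and its complement. On $E_\delta$, positivity of the Poisson kernel and the normalization give $|I_1|\le\ep\int_0^{2\pi}P(r,t-\theta)\,dt=\ep$.

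For the tail $I_2$, put $M=\sup_{\mathbb T}|\phi|<\infty$. Restrict to $z$ with $|\theta-\tau|<\delta/2$ (which holds once $z$ is close enough to $\zeta$); then $|t-\tau|\ge\delta$ forces $|t-\theta|\ge\delta/2$, and from $1-2r\cos(t-\theta)+r^2\ge(1-r)^2+2r(1-\cos(\delta/2))\ge 2r(1-\cos(\delta/2))$ we obtain $P(r,t-\theta)\le\frac{1-r^2}{4\pi r(1-\cos(\delta/2))}$ uniformly off $E_\delta$. Hence $|I_2|\le 2M\cdot\frac{1-r^2}{4\pi r(1-\cos(\delta/2))}\cdot 2\pi\to 0$ as $r\to 1$. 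Combining, $|f(z)-\phi(\zeta)|<2\ep$ for all $z\in\mathbb U$ sufficiently near $\zeta$, which proves the proposition.

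The computation is routine; the only point deserving attention is that $z$ may approach $\zeta$ in an \emph{unrestricted} manner rather than nontangentially, so the bound on the Poisson kernel in the tail region must be kept uniform in $\theta$ (not merely in $r$) — and this is precisely what the elementary lower bound on the denominator above delivers.
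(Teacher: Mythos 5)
Your argument is correct: it is the classical approximate-identity proof that a Poisson integral of a bounded function extends continuously to any boundary point where the boundary data is continuous, and the tail estimate is handled properly so that the approach $z\to\zeta$ may be unrestricted rather than nontangential. The paper gives no proof of this proposition at all --- it is stated as well-known (alongside the companion result cited from Duren) --- so your write-up simply supplies the standard details the authors omit, and there is nothing to object to beyond the routine remark that $|t-\tau|$ should be read as distance modulo $2\pi$.
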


\begin{definition} At any point $\zeta$ of the unit circle $\mathbb T$,
the cluster set, $C_{\mathbb U} (f,\zeta)$, is defined as follows:
$\alpha\in C_{\mathbb U}(f,\zeta)$ if there exists a sequence $(z_n)
\subseteq \mathbb {U}$ such that $\lim_{n\to \infty} z_n = \zeta$
while $\lim_{n\to \infty} f(z_n) = \alpha$. It is known that for any
$\zeta$ the cluster set $C_{\mathbb U}(f,\zeta)$ is nonempty and
closed.
\end{definition}

\begin{theorem}\label{cont} Let $f:\mathbb{U}\rightarrow\Sigma$ be a
harmonic homeomorphism of the unit disk onto a Jordan surface with
rectifiable boundary $\Gamma$. Then
\begin{enumerate}

\item Then there exists a function
$\phi : \mathbb{T} \rightarrow \Gamma $ with bounded variation and
with at most countable set of  points of discontinuity, where it has
the left and the right limit such that $f=P[\phi]$.

\item  If the boundary $\Gamma$ of $\Sigma$ does not contain any segment,
then $f$ has a continuous extension up to the boundary.

\item If $e^{is}$ is a point of discontinuity of $\phi$, then there exist
$A_0=\lim_{t\uparrow  s^-} \phi(t), \ \ B_0=\lim_{t\downarrow
s}\phi(t)$ and $$C_{\mathbb U}(f,e^{is})=[A_0,B_0]\subseteq
\Gamma.$$
\end{enumerate}
\end{theorem}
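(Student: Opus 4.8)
The plan is to obtain the three assertions in order from Lemma~\ref{vaz}, Proposition~\ref{seg}, the continuity proposition stated just above, and one elementary topological observation. Assertion (1) is precisely Lemma~\ref{vaz}: fix the function $\phi$ it produces, with $f=P[\phi]$, and let $X\subseteq\mathbb{T}$ be its (at most countable) discontinuity set, at each point of which the one-sided limits of $\phi$ exist. The topological observation is that $C_{\mathbb U}(f,\zeta)\subseteq\Gamma$ for every $\zeta\in\mathbb{T}$. Indeed $f$ maps $\mathbb{U}$ homeomorphically onto the relative interior $\Sigma\setminus\Gamma$, so $C_{\mathbb U}(f,\zeta)\subseteq\overline{\Sigma\setminus\Gamma}=\Sigma$; and were some cluster value $\alpha$ to lie in $\Sigma\setminus\Gamma$, writing $\alpha=f(w_0)$ with $w_0\in\mathbb{U}$ and choosing $z_n\to\zeta$ with $f(z_n)\to\alpha$, continuity of $f^{-1}$ at $\alpha$ would force $z_n=f^{-1}(f(z_n))\to w_0\in\mathbb{U}$, contradicting $z_n\to\zeta\in\mathbb{T}$.

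For assertion (3) fix $e^{is}\in X$; by (1) the one-sided limits $A_0=\lim_{t\uparrow s}\phi(e^{it})$ and $B_0=\lim_{t\downarrow s}\phi(e^{it})$ exist and $A_0\ne B_0$. The inclusion $[A_0,B_0]\subseteq C_{\mathbb U}(f,e^{is})$ is immediate from Proposition~\ref{seg}: as the angle parameter $\lambda$ runs through $[-1,1]$ the values $\tfrac12(1-\lambda)A_0+\tfrac12(1+\lambda)B_0$ sweep out the whole segment, each being a limit of $f$ along an arc terminating at $e^{is}$, hence a cluster value; with the topological observation this also gives $[A_0,B_0]\subseteq\Gamma$. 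For the reverse inclusion I would argue directly from the Poisson formula. Given $\eps>0$, pick $\delta>0$ with $|\phi(e^{it})-A_0|<\eps$ for $t\in(s-\delta,s)$ and $|\phi(e^{it})-B_0|<\eps$ for $t\in(s,s+\delta)$, and for $z=re^{i\theta}\to e^{is}$ split $f(z)=\int_0^{2\pi}P(r,t-\theta)\phi(e^{it})\,dt$ into the integral over $\{|t-s|<\delta\}$ and the remaining one. Since the Poisson kernel concentrates at the boundary point, the remaining integral tends to $0$ as $z\to e^{is}$, while the masses $p_-=\int_{s-\delta}^{s}P(r,t-\theta)\,dt$ and $p_+=\int_{s}^{s+\delta}P(r,t-\theta)\,dt$ satisfy $p_-+p_+\to1$; hence $f(z)$ stays within $C\eps+o(1)$ of the convex combination $\tfrac{p_-}{p_-+p_+}A_0+\tfrac{p_+}{p_-+p_+}B_0\in[A_0,B_0]$. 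Letting $z\to e^{is}$ and then $\eps\to0$ gives $\dist(f(z),[A_0,B_0])\to0$, so $C_{\mathbb U}(f,e^{is})\subseteq[A_0,B_0]$; together with $[A_0,B_0]\subseteq\Gamma$ this is the equality asserted in (3).

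Finally, assertion (2) falls out of (3): if $\Gamma$ contains no nondegenerate segment, then $\phi$ can have no discontinuity at all (a jump at some $e^{is}$ would place $[A_0,B_0]$ with $A_0\ne B_0$ inside $\Gamma$), so $\phi$ is continuous on all of $\mathbb{T}$; the continuity proposition then provides a continuous extension of $f$ to $\zeta\cup\mathbb{U}$ at each $\zeta\in\mathbb{T}$, and these local extensions, all taking the value $\phi(\zeta)$ at $\zeta$, patch together into a continuous extension of $f$ to $\overline{\mathbb{U}}$. I expect the one genuinely nonroutine step to be the Poisson-kernel concentration estimate in (3); everything else is assembling the quoted results together with the elementary topology of the cluster set.
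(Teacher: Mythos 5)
Your proposal is correct and, for most of the theorem, follows the paper's own route: item (1) is quoted from Lemma~\ref{vaz}, the inclusion $[A_0,B_0]\subseteq C_{\mathbb U}(f,e^{is})$ comes from Proposition~\ref{seg} by letting $\lambda$ sweep $[-1,1]$, the inclusion $C_{\mathbb U}(f,e^{is})\subseteq\Gamma$ comes from $f$ being a homeomorphism onto $\Sigma$ (you spell out the elementary $f^{-1}$-continuity argument that the paper leaves implicit), and item (2) is deduced exactly as in the paper from the fact that a nondegenerate segment $[A_0,B_0]$ cannot sit inside a segment-free $\Gamma$. The one place you genuinely diverge is the reverse inclusion $C_{\mathbb U}(f,e^{is})\subseteq[A_0,B_0]$: the paper handles this by claiming that any sequence $z_n\to e^{is}$ admits a subsequence lying on a Jordan arc meeting $\mathbb T$ at a definite angle $-\lambda_0\pi/2$, and then invokes Proposition~\ref{seg} once more; you instead estimate $P[\phi](z)$ directly for unrestricted $z\to e^{is}$, splitting the integral at $|t-s|=\delta$ and using that the mass of the Poisson kernel concentrates near $s$, to conclude $\dist(f(z),[A_0,B_0])\to 0$. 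Both arguments are sound, but yours is arguably the more robust of the two: it avoids the paper's somewhat informal step of producing an arc with a well-defined approach angle through a subsequence (which requires extracting a subsequence along which the approach direction converges before Proposition~\ref{seg} can be applied), and it gives the stronger, cleaner statement that $f(z)$ converges to the segment uniformly, from which the cluster-set identity is immediate. The price is that you redo by hand a Poisson-kernel estimate that the paper gets for free by reusing Proposition~\ref{seg}. No gaps.
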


\begin{proof} The item (1) is contained  in Lemma~\ref{vaz}.

Take $\theta_0 \in[0,2\pi]$. From item (1), there exist the left and
the right boundary values of $\psi$ at $\theta_0$. Let $\lim_{\theta
\uparrow \theta_0} f(e^{i\theta})=A_0$ and $\lim_{\theta \downarrow
\theta_0} f(e^{i\theta})=B_0$. For $R>0$ and for $-1\le \lambda \le
1$ let
$$z_R=e^{i\theta_0}\frac{Re^{i\lambda \frac{\pi}2}-1} {Re^{i\lambda \frac{\pi}2}+1}.$$
Then $z_R \to e^{i\theta_0}$ as $R \to \infty$ and the angle between
tangent of $\Gamma_\lambda=\{z_R: 1\le R\le \infty\}$ at $R=\infty$
and the point $e^{i\theta_0}$ is equal to $-\lambda\pi/2$. In view
of Lemma~\ref{seg} we have
$$\lim_{R \to \infty}f(z_R)=\frac 12(1-\lambda)A_0 +\frac 12(1+\lambda)B_0.$$
It follows that $[A_0,B_0]\subseteq C_{\mathbb{U}}(f,
e^{i\theta_0})$. Since $f:\mathbb {U}\to\Sigma$ is a homeomorphism,
it follows that $C_{\mathbb U}(f,e^{i\theta_0})\subseteq\Gamma$.
Therefore $[A_0,B_0] \subseteq\Gamma$. If $\Gamma$ do not contains
any segment then $A_0=B_0$, i.e. $\phi$ is continuous at
$e^{i\theta}$. This proves the item (2). To finish the proof of (3)
we need to show that $C_{\mathbb {U}}(f, e^{i\theta_0})\subseteq
[A_0,B_0]$. Let $z_n\to e^{i\theta_0}$ and assume that $X=\lim_{n\to
\infty} f(z_n)$. Since $z_n$ is a bounded sequence, it exists a
Jordan arc in $\mathbb{U}$ emanating at $e^{i\theta_0}$, forming the
angle $-\lambda_0\pi/2,\ \lambda_0\in[-1,1]$, with $e^{i\theta_0}$
and containing a subsequence $z_{n_k}$ of $z_n$. Thus
$$X=\lim_{n\to \infty} f(z_{n_k}) = \frac 12(1-\lambda_0)A_0 +\frac 12(1+\lambda_0)B_0\in [A_0,B_0].$$
\end{proof}
The previous theorem may be considered as Carath\'eodory theorem for
harmonic surfaces.
\begin{example}\cite{duren} Assume that $m>2$ is an integer and $0=\theta_0
< \theta_1 < \cdots < \theta_m < \theta_{m+1} = 2\pi$ and define
$$\varphi=\sum_{n=0}^m\theta_n\chi_{[\theta_n,\theta_{n+1}]}.$$
Then $f=P[e^{i\varphi(\theta)}]$ is a harmonic diffeomorphism of the
unit disk onto a Jordan domain inside the polygonal line with
vertices $e^{i\theta_l},\ l=0,1,\cdots,m$.
\end{example}

\begin{lemma}\label{gjat} Assume $\Sigma\subset\mathbb{R}^n$ is a Jordan
surface spanning a rectifiable curve $\Gamma$, parametrized by
harmonic coordinates $f$. Let $0<r<1$ and $\Gamma_r = f(r \mathbb
T)$. Then $|\Gamma_r|$ is increasing and
\begin{equation}\label{impo}
|\Gamma_r|\le |\Gamma|.
\end{equation}
\end{lemma}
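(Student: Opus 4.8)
The plan is to reduce both assertions to standard estimates for the Poisson kernel, working from the representation $f=P[\phi]$ with $\phi\in BV(\mathbb{T})$ supplied by Lemma~\ref{vaz}. Since $f$ is harmonic it is real-analytic in $\mathbb{U}$, so for each fixed $r\in(0,1)$ the closed curve $\theta\mapsto f(re^{i\theta})$ is smooth and
$$|\Gamma_r|=\int_0^{2\pi}\left|\partial_\theta f(re^{i\theta})\right|\,d\theta.$$
The first step is to rewrite $\partial_\theta f$ as a Poisson--Stieltjes integral. Differentiating $f=P[\phi]$ under the integral sign (legitimate since $P(r,\cdot)$ is $C^\infty$ for $r<1$ and $\phi$ is bounded), using $\partial_\theta P(r,\theta-t)=-\partial_t P(r,\theta-t)$, and integrating by parts on the circle --- where no boundary terms occur, the at most countably many jumps of $\phi$ being absorbed into the Stieltjes measure $d\phi$ --- one obtains
$$\partial_\theta f(re^{i\theta})=\int_0^{2\pi}P(r,\theta-t)\,d\phi(t),$$
a vector-valued Riemann--Stieltjes integral, with $|d\phi|(\mathbb{T})=V_{\mathbb{T}}(\phi)$.

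Monotonicity then follows quickly. Each component of $\partial_\theta f$ is again harmonic, because the Laplacian commutes with rotations (equivalently $\partial_\theta f=\tfrac{d}{d\lambda}\big|_{\lambda=0}f(e^{i\lambda}z)$), so for $0<\rho<1$ the Poisson formula for the disk $r\mathbb{U}$ gives $\partial_\theta f(\rho r e^{i\theta})=\int_0^{2\pi}P(\rho,\theta-t)\,\partial_\theta f(re^{it})\,dt$; estimating the modulus of the integral by the integral of the modulus, integrating in $\theta$, applying Fubini, and using $\int_0^{2\pi}P(\rho,\theta-t)\,d\theta=1$ yields $|\Gamma_{\rho r}|\le|\Gamma_r|$. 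The same estimate applied to the representation by $d\phi$ gives
$$|\Gamma_r|\le\int_0^{2\pi}\!\!\int_0^{2\pi}P(r,\theta-t)\,|d\phi|(t)\,d\theta=|d\phi|(\mathbb{T})=V_{\mathbb{T}}(\phi).$$

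It remains to bound $V_{\mathbb{T}}(\phi)$ by $|\Gamma|$. Here I would use the structure of $\phi$ produced in the proof of Lemma~\ref{vaz}, namely $\phi=\Phi\circ\varphi_0$ where $\Phi$ is a homeomorphic parametrization of $\Gamma$ and $\varphi_0:\mathbb{T}\to\mathbb{T}$ is monotone. By Scheeffer's theorem $\Phi$ has bounded variation with $V_{\mathbb{T}}(\Phi)=|\Gamma|$, and precomposition with a monotone degree-one self-map of the circle cannot increase the total variation (the images under $\varphi_0$ of any ordered partition are again ordered and span an arc of total length $2\pi$), so $V_{\mathbb{T}}(\phi)\le V_{\mathbb{T}}(\Phi)=|\Gamma|$, which closes the argument. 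The step I expect to require the most care is precisely this last one together with the Stieltjes integration by parts: keeping the bookkeeping of the jumps of $\phi$ correct, and justifying $V_{\mathbb{T}}(\Phi\circ\varphi_0)\le V_{\mathbb{T}}(\Phi)$ carefully on the circle rather than on an interval. Once the identity for $\partial_\theta f$ is in place, the Poisson-kernel estimates themselves are entirely routine.
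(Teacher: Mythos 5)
Your proof is correct and follows essentially the same route as the paper: represent $\partial_\theta f$ as a Poisson--Stieltjes integral against $d\phi$ with $\phi\in BV$ from Lemma~\ref{vaz}, then bound $|\Gamma_r|$ by the total variation of $\phi$, which is controlled by $|\Gamma|$. You are in fact somewhat more complete than the paper, which asserts $\var F=|\Gamma|$ without the reparametrization argument and does not explicitly prove the monotonicity of $|\Gamma_r|$ (your derivation via the Poisson representation of the harmonic function $\partial_\theta f$ on subdisks supplies that missing piece).
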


\begin{proof} The total variation of a function $f\in BV$ is
$$\displaystyle \var_{\mathbb T_r} f = \sup \sum_{i=1}^m |f(re^{i s_{k+1}})-f(re^{i s_k})|,$$
where $\sup$ is taken for all finite divison of the unit circle
$\mathbb T$.

Put $z=re^{it}$. We proved that $f=P[F]$, where $F\in BV$. Then by
\eqref{e:POISSON}, using integration by parts, it follows that
$f_\tau$ equals the Poisson-Stieltjes integral of $dF$:
\[\begin{split}
f_\tau(re^{i\tau}) &=\int_0^{2\pi}\partial_\tau
P(r,\tau-t)F(t)dt\\&=-\int_0^{2\pi}\partial_t
P(r,\tau-t)F(t)dt\\&=-P(r,\tau-t)F(t)\big|_{t=0}^{2\pi}+\int_0^{2\pi}
P(r,\tau-t)dF(t) \\&= \int_0^{2\pi}P(r,\tau-t)dF(t).
\end{split}\]
Thus
\[\begin{split}\var_{\mathbf T_r} \partial_t f&=\int_{-\pi}^\pi
|\partial_t f(re^{it})| dt\\&=\int_{0}^{2\pi}
\left|\int_0^{2\pi}P(r,\tau-t)dF(\tau)\right| dt\\&\le
 \left|\int_0^{2\pi}|dF(\tau)|\right|\int_{0}^{2\pi}P(r,\tau-t) dt
\\&=
 \var_{0 \le t\le
2\pi}|F(t)|=|\Gamma|.
\end{split}\]
\end{proof}

\begin{lemma}\label{le.rect.rn} Let $f:\mathbb{U}\rightarrow\Sigma
\subseteq\mathbb{R}^n$ be a homeomorphism onto the Jordan surface
$\Sigma$ bounded by rectifiable curve $\Gamma$. Suppose that $f$ has
continuous extension on the $\Gamma\setminus E$, where $E$ is a
countable union of segments of $\Gamma$ (if there is any). Further,
let curves $\Gamma_r,\ 0<r<1$ defined by $f(re^{it}),\ 0\le t\le
2\pi$ be rectifiable. Then
$$\limsup_{r\rightarrow 1} |\Gamma_r|\ge |\Gamma|.$$
\end{lemma}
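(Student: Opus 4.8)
The plan is to approximate the boundary curve $\Gamma$ from inside by the curves $\Gamma_r$ and use lower semicontinuity of length together with the (partial) continuity hypothesis on $f$. First I would fix an arbitrary finite partition $0 = t_0 < t_1 < \cdots < t_N = 2\pi$ of $[0,2\pi]$ by points at which the boundary function $\phi$ (provided by Lemma~\ref{vaz}, item (1) of Theorem~\ref{cont}) is continuous; since the exceptional set $X$ where $\phi$ is discontinuous is at most countable, and the set $E$ of segments is a countable union of arcs, such partitions with all nodes outside $X\cup(\text{endpoints of }E)$ are cofinal among all partitions for the purpose of computing the total variation of $\phi$ (equivalently, the length $|\Gamma|$). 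For each such node $t_j$ the point $\phi(e^{it_j})\in\Gamma$ is a point of continuity of $f$ up to the boundary, so by Proposition (the second unnumbered one) $f$ extends continuously at $e^{it_j}$ and $f(re^{it_j})\to\phi(e^{it_j})$ as $r\to 1^-$.

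Next, for fixed $r\in(0,1)$ the inscribed polygonal length along $\Gamma_r$ over these same nodes is a lower bound for $|\Gamma_r|$:
$$\sum_{j=1}^{N}\bigl|f(re^{it_j})-f(re^{it_{j-1}})\bigr|\le |\Gamma_r|.$$
Letting $r\to 1^-$ along a sequence realizing $\limsup_{r\to 1}|\Gamma_r|$, the left-hand side converges termwise to $\sum_{j=1}^N |\phi(e^{it_j})-\phi(e^{it_{j-1}})|$ because each node is a continuity point. Hence
$$\sum_{j=1}^{N}\bigl|\phi(e^{it_j})-\phi(e^{it_{j-1}})\bigr|\le \limsup_{r\to 1}|\Gamma_r|.$$
Finally I would take the supremum over all admissible partitions. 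Because the nodes are restricted to lie outside the countable bad set $X\cup\partial E$, I must check that this restricted supremum still equals $|\Gamma| = \var_{\mathbb T}\phi$: given any partition, perturb each bad node slightly to a nearby good node; on the complement of $E$ the function $\phi\circ\exp(i\cdot)$ is continuous (even where a one-sided segment is approached, the relevant one-sided limit exists and is the endpoint of a segment, which contributes $0$ to the polygonal sum when collapsed), so the polygonal sum changes by an arbitrarily small amount, and the restricted supremum recovers the full variation. This yields $|\Gamma|\le \limsup_{r\to 1}|\Gamma_r|$, as desired.

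The main obstacle is precisely this last bookkeeping step: one has to argue carefully that discarding the countably many discontinuity points of $\phi$ and the (countably many) segment-endpoints in $E$ does not lose any variation, i.e. that approximating partitions with "good" nodes are cofinal. The key point is that on $\mathbb T\setminus E$ the map $\phi$ is continuous (item (2)-type reasoning: outside segments $\phi$ has no jumps), so jumps only occur at endpoints of segments, and at such an endpoint the one-sided limit from the "segment side" lies on the segment itself; sliding the node infinitesimally along the segment changes the inscribed sum negligibly. With this, together with termwise passage to the limit justified by continuity of $f$ at each retained node (the second Proposition), the chain of inequalities above gives the claim. No uniform control on the $\Gamma_r$ is needed beyond their assumed rectifiability; the inequality is one-sided and comes for free from lower semicontinuity of arclength under pointwise convergence.
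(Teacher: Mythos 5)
Your proposal is correct and follows essentially the same route as the paper's proof: inscribe a polygon in $\Gamma$ with nodes at points where $f$ extends continuously to the boundary, compare it with the corresponding inscribed polygon in $\Gamma_r$ (which bounds $|\Gamma_r|$ from below), and let $r\to 1$ using continuity at the retained nodes. The only difference is presentational --- you parametrize by a partition of $[0,2\pi]$ rather than by points chosen on $\Gamma$, and you spell out the ``good nodes are cofinal'' bookkeeping that the paper dispatches with a single ``we may suppose w.l.g.''
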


\begin{proof} Let $d(x,y) = |x-y|$ be the distance between points $x$
and $y$ in $\mathbb{R}^n$. Let $\varepsilon>0$ be fixed. There exist
points $\omega_0,\ \omega_1,\dots, \omega_n \in \Gamma$ such that
$$\sum_{j=0}^n  d(\omega_j, \omega_{j+1})> |\Gamma| - \varepsilon/ 2,$$
where we set $\omega_{n+1}=\omega_0$. We may suppose w.l.g. that
these points do not lie in $\Gamma\setminus E$.

Since $f$ has continuous extension on the boundary of $\Sigma$
without segments, we can find points $\zeta_j\in\mathbb{T}$ such
that $f(\zeta_j)= \omega_j$ for all $j=0,\ 1,\dots,n$. Let
$\omega'_j=f(r\zeta_j)\in\Gamma_r$. The distance between $r\zeta_j$
and $\zeta_j$ is $1-r$ for all $j$. Since $n$ is fixed and since $f$
has continuous extension on $\Gamma \setminus E$, there exist $r$
close enough to $1$ such that
$$s_n:=\sum_{j=0}^n d(\omega'_j , \omega_j) < \varepsilon/ 4$$
Using the triangle inequality
$$d( \omega_j,\omega_{j+1})\le  d(\omega_j,\omega'_j) + d( \omega'_j,\omega'_{j+1}) + d(\omega'_{j+1} , \omega_{j+1}),$$
we get
$$|\Gamma_r| \ge \sum_{j=0}^{n} d(\omega_j',\omega'_{j+1}) \ge \sum_{j=0}^n  d(\omega_j,\omega_{j+1}) - 2 s_n > |\Gamma| -\varepsilon.$$
Since $\varepsilon$ is an arbitrary positive number, it follows
$\lim\sup_{r\rightarrow 1} |\Gamma_r|\ge |\Gamma|$.
\end{proof}

Smirnov theorem for holomorphic functions can be generalized to
harmonic quasiconformal mappings (\cite{matkal}). The following
version of Smirnov only request harmonicity of a homeomorphism and
somehow is optimal.
\begin{theorem}\label{multth2} Let $f:{\mathbb U}\rightarrow \Sigma
\subset \mathbb {R}^n$ be a harmonic homeomorphism on the unit disk
onto the Jordan surface bouneded by the curve $\Gamma$ and let
$\Gamma_r,\ 0 < r <1$ be curves defined by $f(re^{it}),\ 0\le t \le
2\pi$. Then $\partial_t f\in h^1(\mathbb{U})$ if and only if
$\Gamma$ is rectifiable. In this settings, $|\Gamma_r|\rightarrow
|\Gamma|$ as $r\rightarrow 1$.
\end{theorem}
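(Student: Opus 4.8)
The plan is to prove the equivalence in two directions, using the earlier lemmas as the two halves.

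\textbf{Necessity ($\partial_t f\in h^1(\mathbb U)\Rightarrow\Gamma$ rectifiable).} Suppose $\partial_t f$ belongs to the harmonic Hardy class $h^1(\mathbb U)$, so that $\sup_{0<r<1}\int_0^{2\pi}|\partial_t f(re^{it})|\,dt=:M<\infty$. For a fixed partition $0\le t_0<t_1<\dots<t_m=2\pi$ of the circle and any $r<1$ we have, by the fundamental theorem of calculus applied to the $C^1$ curve $t\mapsto f(re^{it})$,
\[\sum_{k=1}^{m}|f(re^{it_k})-f(re^{it_{k-1}})|\le\int_0^{2\pi}|\partial_t f(re^{it})|\,dt\le M.\]
Letting $r\to 1$ along a sequence, the left side tends to $\sum_k|f(e^{it_k})-f(e^{it_{k-1}})|$ once we know the boundary values exist; but existence of radial boundary values of $f$ follows from $f=P[\phi]$ (Lemma~\ref{vaz}) together with continuity of $\phi$ except on a countable set, and the identity $\phi=f^*$ a.e. In fact it is cleaner to note that $\var_{\mathbb T_r}f\le M$ for all $r$ by the displayed estimate, and then invoke Lemma~\ref{gjat}'s monotonicity together with Lemma~\ref{le.rect.rn} in reverse: since the $\Gamma_r$ have uniformly bounded length, the boundary curve $\Gamma=f^*(\mathbb T)$ is rectifiable with $|\Gamma|\le M$. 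Concretely, approximate $|\Gamma|$ by a finite chordal sum over points of $\Gamma$, pull these back (using Theorem~\ref{cont}(2),(3) to handle the at-most-countably-many discontinuities, which form a countable union of segments), and pass to the limit as in the proof of Lemma~\ref{le.rect.rn}.

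\textbf{Sufficiency ($\Gamma$ rectifiable $\Rightarrow\partial_t f\in h^1(\mathbb U)$).} This is essentially Lemma~\ref{gjat}: if $\Gamma$ is rectifiable then by Lemma~\ref{vaz} we have $f=P[\phi]$ with $\phi=F\in BV(\mathbb T)$, and the Poisson--Stieltjes computation in the proof of Lemma~\ref{gjat} gives
\[\int_0^{2\pi}|\partial_t f(re^{it})|\,dt\le\var_{[0,2\pi]}F=|\Gamma|\]
uniformly in $r\in(0,1)$, which is exactly the statement $\partial_t f\in h^1(\mathbb U)$ with $h^1$-norm at most $|\Gamma|$.

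\textbf{The limit $|\Gamma_r|\to|\Gamma|$.} Once rectifiability is known, combine the two one-sided inequalities. Lemma~\ref{gjat} gives that $r\mapsto|\Gamma_r|$ is increasing with $|\Gamma_r|\le|\Gamma|$, hence $\lim_{r\to1}|\Gamma_r|$ exists and is $\le|\Gamma|$. Lemma~\ref{le.rect.rn} gives $\limsup_{r\to1}|\Gamma_r|\ge|\Gamma|$ (its hypotheses are met: by Theorem~\ref{cont}, $f$ extends continuously off a countable union of segments, and each $\Gamma_r$ is rectifiable since $\partial_t f$ is continuous on $\overline{r\mathbb U}$). Therefore $\lim_{r\to1}|\Gamma_r|=|\Gamma|$.

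\textbf{Main obstacle.} The delicate point is the necessity direction: turning the uniform bound on $\var_{\mathbb T_r}f$ into rectifiability of the \emph{boundary} curve $\Gamma=f^*(\mathbb T)$, because $f$ need not extend continuously to all of $\mathbb T$ — it may fail to do so exactly on the countable set where $\phi$ jumps, and there the cluster set is a whole segment (Theorem~\ref{cont}(3)). One must check that these segments are traversed "for free" (contribute nothing beyond their own length, which is already counted) when comparing $\var_{\mathbb T_r}f$ to the chordal approximations of $|\Gamma|$; this is precisely why Lemma~\ref{le.rect.rn} was stated with the exceptional set $E$ a countable union of segments, and the argument there transfers verbatim. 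Everything else is a direct appeal to the lemmas already proved.
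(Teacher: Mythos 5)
Your proposal follows the paper's proof essentially verbatim: sufficiency is Lemma~\ref{gjat} (the Poisson--Stieltjes bound $|\Gamma_r|\le|\Gamma|$), necessity comes from the uniform bound on $|\Gamma_r|$ fed into the chordal-approximation argument of Lemma~\ref{le.rect.rn}, and the limit $|\Gamma_r|\to|\Gamma|$ is obtained by combining the monotone upper bound from Lemma~\ref{gjat} with the $\limsup$ lower bound from Lemma~\ref{le.rect.rn}. The delicate point you flag in the necessity direction (that Lemma~\ref{vaz} and Theorem~\ref{cont}, which supply the continuous extension off the exceptional set, formally presuppose rectifiability of $\Gamma$) is handled no more explicitly in the paper than in your write-up, so your argument is faithful to, and no weaker than, the original.
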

\begin{proof} If $\Gamma$ is rectifiable, according to Lemma~\ref{gjat}
we have $|\Gamma_r|\le |\Gamma|$ what means
$$\int_0^{2\pi} |\partial_t f(re^{it})|\ dt \le |\Gamma|.$$
Thus $\partial_t f\in h^1(\mathbb{U})$. On the other hand, if
$\partial_t f\in h^1(\mathbb{U})$, then $|\Gamma_r|$ is bounded and
according to previous lemma $|\Gamma|$ is finite. Since we have
harmonic parametrization, $|\Gamma_r|$ is an increasing sequence,
thus $\lim_{r\to 1} |\Gamma_r|\le |\Gamma|$, by Lemma \ref{gjat}.
Using lemma \ref{le.rect.rn} we have the reverse inequality. It
follows $\lim_{r\to 1} |\Gamma_r| = |\Gamma|$.
\end{proof}

In the settings of the previous theorem, in general, parametrization
for $\Gamma$ which is induced by $f$ is not always absolutely
continuous (or even continuous). In particular, if $n=2$ and $f$ is
holomorphic, then $f$ induce on $\Gamma$ an absolutely continuous
parametrization (this is Smirnov theorem). Thus there is difference
between harmonic and holomorphic concerning the property of absolute
continuity; see Proposition 2.1 in \cite{maboknez}.
\section{Some classical inequalities for harmonic surfaces-revisited }
Our first aim in this section is to establish the classical
isoperimetric inequality for harmonic surfaces with rectifiable
boundary and without any smoothness assumption on the boundary. We
expect that some of results we prove in this section are well-known,
but due to missing quick references, we include their proofs here.
\subsection{Gaussian curvature of a smooth surface}
The first fundamental form of a surface
$\Sigma\subseteq\mathbb{R}^n$ (not necessary a Jordan surface)
parametrized by a smooth mapping
$\tau(z)=(\tau_1(z),\dots,\tau_n(z)):\Omega\to \Sigma,\ z=x+iy$ is
given by
$$ds^2=Edx^2+2G dx dy+F dy^2$$
where $E = g_{11} = |\tau_x|^2,\ F = g_{12}
=\left<\tau_x,\tau_y\right>$ and $G = g_{22} = |\tau_y|^2$ satisfy
$E G - F^2>0$ on $\Omega$.

The Gaussian curvature is usually expressed as a function of the
first and second fundamental form. However for the surface which are
not embedded in $\mathbb {R}^3$ the second fundamental form is not
defined because it depends on Gauss normal, which is not defined in
a usual way in $\mathbb{R}^n,\ n\ge 4$. The Brioschi formula for the
Gaussian curvature gives us an opportunity to express it by
$$ K(x,y)=  \frac{\left| \begin{array}{ccc}
 -\frac 12 E_{yy} + F_{xy} - \frac 12 G_{xx} & \frac 12 E_x & \frac 12 F_x-\frac 12 G_x\\
 F_y -\frac 12 G_x& E & F \\
\frac 12 G_y & F & G \end{array} \right|
 - \left| \begin{array}{ccc}
 0 & \frac 12 E_y & \frac 12 G_x\\
\frac 12 E_y & E & F \\
\frac 12 G_x & F & G \end{array} \right|}{(EG -F^2)^2}.$$ This is
indeed an alternative formulation of the fundamental Gauss's Theorem
Egregium and consequently  the Gaussian curvature does not depend
whether the surface is embedded on $\mathbb{R}^3$ or in some other
Riemann manifold.

For three vectors $a=(a_1,\dots, a_n)$, $b=(b_1,\dots, b_n)$ and
$c=(c_1,\dots, c_n)$ we define the matrix
$$[a,b,c]:=\left(
\begin{array}{cccc}
a_1 & a_2 & \dots & a_n \\
b_1 & b_2 & \dots & b_n \\
c_1 & c_2 & \dots & c_n \\
\end{array}
\right).$$

\begin{lemma} Let $\Sigma$ be a surface in $\mathbb{R}^n$ with
parametrization $\tau = \tau(x,y) = (\tau_1,\dots,\tau_n)$ which is
enough smooth. The Gaussian curvature can be expressed as
\begin{equation}\label{formula}
K(x,y)=\frac{\det([\tau_{xx},\tau_x,\tau_y]\times[\tau_{yy},\tau_x,\tau_y]^T)
-\det([\tau_{xy},\tau_x,\tau_y]\times[\tau_{xy},\tau_x,\tau_y]^T)}
{(|\tau_x|^2|\tau_y|^2-\left<\tau_x,\tau_y\right>^2)^2}.
\end{equation}
\end{lemma}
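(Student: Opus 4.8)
The plan is to derive \eqref{formula} directly from the Brioschi formula recorded just above it, by translating every entry of the two Brioschi determinants into an inner product of first and second partial derivatives of $\tau$, and then recognizing the two resulting determinants as exactly the two determinants in \eqref{formula}, up to a single correction term that will cancel.

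First I would differentiate the defining identities $E=\left<\tau_x,\tau_x\right>$, $F=\left<\tau_x,\tau_y\right>$, $G=\left<\tau_y,\tau_y\right>$. This yields the elementary relations
\[
\tfrac12 E_x=\left<\tau_{xx},\tau_x\right>,\quad \tfrac12 E_y=\left<\tau_{xy},\tau_x\right>,\quad \tfrac12 G_x=\left<\tau_{xy},\tau_y\right>,\quad \tfrac12 G_y=\left<\tau_{yy},\tau_y\right>,
\]
\[
F_x-\tfrac12 E_y=\left<\tau_{xx},\tau_y\right>,\qquad F_y-\tfrac12 G_x=\left<\tau_x,\tau_{yy}\right>,
\]
which account for every entry of the two Brioschi determinants except the top‑left one. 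For that entry I would differentiate once more; the third‑order terms $\left<\tau_{xxy},\tau_y\right>$ and $\left<\tau_x,\tau_{xyy}\right>$ then occur with opposite signs and drop out, leaving the key identity
\[
-\tfrac12 E_{yy}+F_{xy}-\tfrac12 G_{xx}=\left<\tau_{xx},\tau_{yy}\right>-|\tau_{xy}|^2 .
\]

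With these substitutions in hand, I would observe that the first $3\times3$ Brioschi determinant is the determinant of $[\tau_{xx},\tau_x,\tau_y]\cdot[\tau_{yy},\tau_x,\tau_y]^{T}$ with its $(1,1)$ entry $\left<\tau_{xx},\tau_{yy}\right>$ decreased by $|\tau_{xy}|^2$; since the $(1,1)$ cofactor of that matrix is $EG-F^2$, linearity of the determinant in the first column shows the first Brioschi determinant equals
\[
\det\big([\tau_{xx},\tau_x,\tau_y]\cdot[\tau_{yy},\tau_x,\tau_y]^{T}\big)-|\tau_{xy}|^2\,(EG-F^2).
\]
In exactly the same way, the second Brioschi determinant is the determinant of $[\tau_{xy},\tau_x,\tau_y]\cdot[\tau_{xy},\tau_x,\tau_y]^{T}$ with its $(1,1)$ entry $|\tau_{xy}|^2$ lowered to $0$, hence equals
\[
\det\big([\tau_{xy},\tau_x,\tau_y]\cdot[\tau_{xy},\tau_x,\tau_y]^{T}\big)-|\tau_{xy}|^2\,(EG-F^2).
\]
Subtracting the second from the first, the two $|\tau_{xy}|^2(EG-F^2)$ terms cancel, and dividing by $(EG-F^2)^2$ gives \eqref{formula}.

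The main difficulty is purely clerical: matching each Brioschi entry with the right inner product and keeping signs straight while applying multilinearity in the first column. As a consistency check, and an alternative to the brute‑force route, I would note that a Schur‑complement computation relative to the invertible block $\left(\begin{smallmatrix}E&F\\F&G\end{smallmatrix}\right)$ gives $\det\big([a,\tau_x,\tau_y]\cdot[b,\tau_x,\tau_y]^{T}\big)=(EG-F^2)\left<a^{\perp},b^{\perp}\right>$ for arbitrary vectors $a,b$, where $v^{\perp}$ denotes the component of $v$ orthogonal to $\mathrm{span}(\tau_x,\tau_y)$; the right side of \eqref{formula} then reduces to $\big(\left<\tau_{xx}^{\perp},\tau_{yy}^{\perp}\right>-|\tau_{xy}^{\perp}|^2\big)/(EG-F^2)$, which is the Gauss equation for $K$ in $\mathbb{R}^n$, and this also makes transparent that \eqref{formula} depends only on the normal parts of the second derivatives.
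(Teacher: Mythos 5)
Your proposal is correct and follows essentially the same route as the paper: derive the inner-product identities for the derivatives of $E,F,G$, establish the key relation $-\tfrac12 E_{yy}+F_{xy}-\tfrac12 G_{xx}=\left<\tau_{xx},\tau_{yy}\right>-|\tau_{xy}|^2$, and use multilinearity of the determinant in the $(1,1)$ entry so that the two correction terms $|\tau_{xy}|^2(EG-F^2)$ cancel when the Brioschi determinants are compared with $\det([\tau_{xx},\tau_x,\tau_y]\cdot[\tau_{yy},\tau_x,\tau_y]^T)$ and $\det([\tau_{xy},\tau_x,\tau_y]\cdot[\tau_{xy},\tau_x,\tau_y]^T)$. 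Your closing Schur-complement observation, reducing \eqref{formula} to the Gauss equation $\bigl(\left<\tau_{xx}^{\perp},\tau_{yy}^{\perp}\right>-|\tau_{xy}^{\perp}|^2\bigr)/(EG-F^2)$, is a worthwhile extra check not present in the paper, and it also flags that the $(1,3)$ entry of the first Brioschi determinant should read $F_x-\tfrac12 E_y$ (as you use) rather than $\tfrac12 F_x-\tfrac12 G_x$ as printed.
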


\begin{remark} In standard expressions for Gaussian curvature, it
appears the third derivative of the parametrization. In formula
\eqref{formula} we have only the first and the second derivative
which is intrigue, but the proof depends on the third derivative of
$\tau$ as well and thus we should assume that the regularity of
$\tau$ is something more than class $C^2$.
\end{remark}

\begin{proof} First of all we have the equalities
$$E_y=2\left<\tau_{xy},\tau_x\right>, \  \  E_{yy}=2\left<\tau_{xyy},\tau_x\right>+2\left | \tau_{xy} \right |^2,$$
$$F_x=\left<\tau_{xx},\tau_y\right>+\left<\tau_x,\tau_{xy}\right>, \  \ F_{xy}=\left<\tau_{xxy},\tau_y\right>+\left<\tau_{xx},\tau_{yy}\right>+\left|\tau_{xy}\right|^2+\left<\tau_x,\tau_{xyy}\right>,$$
$$G_x=2\left<\tau_{xy},\tau_y\right>, \  \ G_{xx}=2\left<\tau_{xxy},\tau_y\right>+2\left|\tau_{xy}\right|^2$$
and
$$-\frac 12 E_{yy} + F_{xy} - \frac 12 G_{xx}=\left<\tau_{xx},\tau_{yy}\right>- \left|\tau_{xy}\right|^2.$$
Then
\[\begin{split}
\det([\tau_{xy},\tau_x,\tau_y]\times[\tau_{xy},\tau_x,\tau_y]^T) &=
\left| \begin{array}{ccc}
 |\tau_{xy}|^2 & \frac 12 E_y & \frac 12 G_x\\
\frac 12 E_y & E & F \\
\frac 12 G_x & F & G \end{array} \right|
\\& = \left| \begin{array}{ccc}
 |\tau_{xy}|^2 & 0 & 0\\
\frac 12 E_y & E & F \\
\frac 12 G_x & F & G \end{array} \right|
 + \left| \begin{array}{ccc}
 0 & \frac 12 E_y & \frac 12 G_x\\
\frac 12 E_y & E & F \\
\frac 12 G_x & F & G \end{array} \right|
\end{split}\]
and
\[\begin{split}
\det([\tau_{xx},\tau_x,\tau_y]&\times[\tau_{yy},\tau_x,\tau_y]^T)
\\& = \left| \begin{array}{ccc}
|\tau_{xy}|^2 -\frac 12 E_{yy} + F_{xy} - \frac 12 G_{xx} & \frac 12 E_x & \frac 12 F_x-\frac 12 G_x\\
 F_y -\frac 12 G_x& E & F \\
\frac 12 G_y & F & G \end{array} \right|
\\& = \left| \begin{array}{ccc}
|\tau_{xy}|^2 & 0 & 0 \\
 F_y -\frac 12 G_x& E & F \\
\frac 12 G_y & F & G \end{array} \right|
\\& + \left| \begin{array}{ccc}
 -\frac 12 E_{yy} + F_{xy} - \frac 12 G_{xx} & \frac 12 E_x & \frac 12 F_x-\frac 12 G_x\\
 F_y -\frac 12 G_x& E & F \\
\frac 12 G_y & F & G \end{array} \right|.
\end{split}\]
The equality of the lemma now follows from Brioschi formula for
Gaussian curvature.
\end{proof}

\begin{theorem}\label{kneg} If $\Sigma$ is a simple connected harmonic
surface which allows regular harmonic parametrization $\tau$, then
the Gaussian curvature of $\Sigma$ is nonpositive.
\end{theorem}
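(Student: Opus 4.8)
The statement is that a regular harmonic parametrization $\tau$ of a simply‑connected surface $\Sigma \subset \mathbb{R}^n$ forces $K(x,y) \le 0$ everywhere. The key structural fact is that $\tau$ harmonic means $\tau_{xx} = -\tau_{yy}$ (since each coordinate $\tau_j$ satisfies $\Delta\tau_j = 0$), so $\tau_{xx} + \tau_{yy} = 0$. The plan is to feed this identity into formula \eqref{formula} from the preceding lemma and show the resulting numerator is $\le 0$; since the denominator $(|\tau_x|^2|\tau_y|^2 - \langle\tau_x,\tau_y\rangle^2)^2 = J_\tau^4 > 0$ by regularity, the sign of $K$ is the sign of the numerator.

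\medskip

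\textbf{Main steps.} First, abbreviate $a = \tau_{xx} = -\tau_{yy}$, $b = \tau_{xy}$, $p = \tau_x$, $q = \tau_y$, and observe that $[\tau_{xx},\tau_x,\tau_y] = [a,p,q]$ while $[\tau_{yy},\tau_x,\tau_y] = [-a,p,q]$. I would then expand both Gram‑type determinants in \eqref{formula}. For three vectors $u,v,w$, the quantity $\det([u,v,w]\cdot[u',v,w]^T)$ is the inner product of the exterior products $(u\wedge v\wedge w)$ and $(u'\wedge v\wedge w)$ in $\Lambda^3\mathbb{R}^n$ (a Gram/Cauchy–Binet computation). Hence
$$\det([\tau_{xx},\tau_x,\tau_y]\cdot[\tau_{yy},\tau_x,\tau_y]^T) = \langle a\wedge p\wedge q,\; -a\wedge p\wedge q\rangle = -\|a\wedge p\wedge q\|^2,$$
and similarly
$$\det([\tau_{xy},\tau_x,\tau_y]\cdot[\tau_{xy},\tau_x,\tau_y]^T) = \|b\wedge p\wedge q\|^2.$$
Therefore the numerator of $K$ equals $-\|a\wedge p\wedge q\|^2 - \|b\wedge p\wedge q\|^2 \le 0$, which is exactly what is needed. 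One must only note that $\|a\wedge p\wedge q\|^2 = \det([a,p,q]\cdot[a,p,q]^T)$ is a genuine Gram determinant of the three vectors $a,p,q$ and hence nonnegative, which follows from positive semidefiniteness of Gram matrices.

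\medskip

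\textbf{Where the work concentrates.} The conceptual content is minimal once the exterior‑algebra reinterpretation is in place; the possible obstacle is purely bookkeeping: one has to justify the identity $\det([u,v,w]\cdot[u',v,w]^T) = \langle u\wedge v\wedge w, u'\wedge v\wedge w\rangle$ (equivalently, the generalized Cauchy–Binet formula applied to the $3\times n$ and $n\times 3$ matrices), and then correctly track the sign flip coming from $\tau_{yy} = -\tau_{xx}$. If one prefers to avoid exterior algebra, the same conclusion is reached by direct row operations in the Brioschi determinant: substituting $-\tfrac12 E_{yy} + F_{xy} - \tfrac12 G_{xx} = \langle\tau_{xx},\tau_{yy}\rangle - |\tau_{xy}|^2$ (already recorded in the previous proof) together with $\tau_{yy} = -\tau_{xx}$ turns this entry into $-|\tau_{xx}|^2 - |\tau_{xy}|^2$, and a short manipulation of the $3\times3$ determinants shows the numerator is the negative sum of two squared volumes. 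Either way, the regularity hypothesis is used only to guarantee $J_\tau > 0$ so that division by $(EG-F^2)^2$ is legitimate and $K$ is well defined; in the planar case $n=2$ this recovers the classical fact that harmonic (hence, by Lewy, locally diffeomorphic) maps give flat image metrics only when affine, while $K \le 0$ is automatic.
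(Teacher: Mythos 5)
Your proposal is correct and follows essentially the same route as the paper: substitute $\tau_{yy}=-\tau_{xx}$ into formula \eqref{formula} and observe that the numerator becomes the negative of a sum of two Gram determinants $\det(MM^{T})\ge 0$, hence $K\le 0$. Your exterior-product/Cauchy--Binet phrasing is just a more explicit justification of the paper's terse remark that ``the corresponding matrices are symmetric,'' so there is no substantive difference.
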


\begin{proof} Let $\Sigma$ be a simple connected harmonic surface with
regular harmonic parametrization $\tau$, that is, let $\Delta \tau
=(0,\dots,0)$. Since $\tau_{yy}=-\tau_{xx}$ we obtain
$$\det([\tau_{xx},\tau_x,\tau_y]\times[\tau_{yy},\tau_x,\tau_y]^T)
-\det([\tau_{xy},\tau_x,\tau_y]\times[\tau_{xy},\tau_x,\tau_y]^T)$$
$$=-\det([\tau_{xx},\tau_x,\tau_y]\times[\tau_{xx},\tau_x,\tau_y]^T)
-\det([\tau_{xy},\tau_x,\tau_y]\times[\tau_{xy},\tau_x,\tau_y]^T)\le
0,$$ because the corresponding matrices are symmetric. The previous
lemma implies that the Gauss curvature of $\Sigma$ is non-positive.
\end{proof}

Since the Gaussian curvature is an intrinsic invariant of the
surface, from Theorem \ref{kneg} we deduce the following result.

\begin{theorem}[Isoperimetric inequality for harmonic surfaces]\label{per}
If $\Sigma\subset \mathbb {R}^n$ is a Jordan harmonic surface with
rectifiable boundary $\Gamma$, then we have the classical
isoperimetric inequality
\begin{equation}\label{shif}
{4\pi} |\Sigma|\le |\Gamma|^2.
\end{equation}
\end{theorem}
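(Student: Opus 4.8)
The plan is to reduce the isoperimetric inequality for an arbitrary Jordan harmonic surface $\Sigma$ with rectifiable boundary to the smooth case, by approximating $\Sigma$ from the inside by the dilated surfaces $\Sigma_r=f(\{|z|<r\})$ and invoking Theorem~\ref{kneg} together with the classical isoperimetric inequality for surfaces of nonpositive Gaussian curvature. First I would fix a harmonic parametrization $\tau=f:\mathbb U\to\Sigma$, which exists by hypothesis, and consider for $0<r<1$ the surface $\Sigma_r=f(r\mathbb U)$ with boundary curve $\Gamma_r=f(r\mathbb T)$. The restriction $f|_{r\mathbb U}$ is harmonic and real-analytic up to $r\mathbb T$, so $\Sigma_r$ is a compact (topological) disk bounded by the smooth Jordan curve $\Gamma_r$. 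At branch points of $\tau$ the parametrization degenerates, but these form a discrete set (the zeros of $J_\tau$, which for harmonic mappings are isolated unless $\tau$ is constant on a component); away from them $\Sigma_r$ is a smooth surface of nonpositive Gaussian curvature by Theorem~\ref{kneg}.

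Next I would apply the isoperimetric inequality for (simply connected) surfaces with nonpositive Gaussian curvature — this is the Weil/Beckenbach–Rad\'o theorem, valid for minimal-type surfaces and more generally whenever $K\le 0$ on a disk-type surface — to obtain $4\pi|\Sigma_r|\le|\Gamma_r|^2$ for each $r<1$. The presence of finitely many branch points is handled by excising small geodesic disks around them and passing to the limit, or by appealing to the formulation of the inequality that only requires $K\le 0$ in the weak/distributional sense, which harmonic surfaces satisfy globally. Then I would let $r\to1$. On the right-hand side, Theorem~\ref{multth2} (the Smirnov-type theorem) gives precisely $|\Gamma_r|\to|\Gamma|$. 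On the left-hand side, $|\Sigma_r|=\int_{r\mathbb U}J_\tau\,dA$ increases to $\int_{\mathbb U}J_\tau\,dA=|\Sigma|$ by monotone convergence. Combining, $4\pi|\Sigma|=\lim_{r\to1}4\pi|\Sigma_r|\le\lim_{r\to1}|\Gamma_r|^2=|\Gamma|^2$, which is \eqref{shif}.

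The main obstacle I anticipate is justifying the isoperimetric inequality $4\pi|\Sigma_r|\le|\Gamma_r|^2$ for the approximating surfaces in the presence of branch points, and more delicately, making sure the classical $K\le0$ isoperimetric inequality is being applied in a setting where it genuinely holds (it requires the surface to be a topological disk with a metric of nonpositive curvature, possibly with conical singularities of positive angle excess at the branch points — and conical points with angle $>2\pi$, which is what harmonic branch points produce, are in fact favorable for the isoperimetric inequality, not harmful). A secondary technical point is the finiteness of $|\Sigma|$ itself, but this follows from $|\Sigma_r|\le(4\pi)^{-1}|\Gamma_r|^2\le(4\pi)^{-1}|\Gamma|^2$ via Lemma~\ref{gjat}, so the monotone limit is finite and the argument is not circular. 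Once these points are dispatched, the passage to the limit is routine.
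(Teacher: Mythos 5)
Your overall route is the same as the paper's: dilate to $\Sigma_r=\tau(r\mathbb U)$, apply the classical isoperimetric inequality for simply connected surfaces of nonpositive Gaussian curvature (Theorem~\ref{kneg}), and pass to the limit $r\to1$ using Lemma~\ref{gjat} / Theorem~\ref{multth2} on the boundary lengths and monotone convergence on the areas. The limit passage and the non-circularity remark about finiteness of $|\Sigma|$ are fine.

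The gap is in your treatment of the degeneracy set. You assert that the zeros of $J_\tau$ ``for harmonic mappings are isolated unless $\tau$ is constant on a component.'' That is a property of \emph{conformal} harmonic (i.e.\ minimal) parametrizations, where branch points are zeros of holomorphic data; for a general harmonic map into $\mathbb R^n$, $n\ge3$, the set where $\nabla\tau$ drops rank is the zero set of the real-analytic function $|\tau_x|^2|\tau_y|^2-\left<\tau_x,\tau_y\right>^2$, which can be a one-dimensional analytic set, not a discrete one. Consequently the excision-of-small-disks argument, and the picture of isolated conical singularities of angle $>2\pi$, are not justified at the level of generality the theorem requires; the appeal to a ``weak/distributional'' version of the $K\le0$ isoperimetric inequality is left unproved and is really the whole difficulty. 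The paper sidesteps this entirely with a perturbation: for $\varepsilon>0$ and $0<r<1$ set $\tau^\varepsilon_r(z)=(\tau(rz),\varepsilon z)\in\mathbb R^{n+2}$. This is still harmonic, it is \emph{regular everywhere} because the last two coordinates contribute $\varepsilon\,{\rm Id}$ to the differential, and Theorem~\ref{kneg} applies to give $K\le0$ with no exceptional set; one then has $4\pi|\Sigma^\varepsilon_r|\le|\Gamma^\varepsilon_r|^2$ cleanly and lets $\varepsilon\to0$, then $r\to1$, using \eqref{impo}. If you replace your branch-point discussion with this perturbation step, your argument matches the paper's proof.
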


\begin{proof} Let $\tau:\mathbb U\to \Sigma$ be a harmonic parametrization
of $\Sigma$. Since $\tau$ is not necessarily regular, as in
\cite{shiff}, let us perturb the surface $\Sigma$ in
$\mathbb{R}^{n+2}$ by taking for $\varepsilon>0$ and $0<r<1$ the
harmonic homeomorphism $\tau^\varepsilon_r(z) =(\tau(rz),\varepsilon
z)\in\mathbb {R}^{n+2},\ z\in \mathbb{U}$ we obtain a harmonic
parametrization  of a regular simple--connected harmonic surface
$\Sigma^\varepsilon_r = \tau^\varepsilon_r(
\mathbb{U})\subseteq\mathbb R^{n+2}$ with smooth boundary. Since the
Gauss curvature of $\Sigma^\varepsilon_r$ is non-positive, applying
the classical result, we obtain
\begin{equation}\label{sig}4\pi|\Sigma^\varepsilon_r|\le
|\Gamma^\varepsilon_r|^2.\end{equation} Letting first
$\varepsilon\to 0$ and then $r\to 1$, by the inequality \eqref{impo}
we obtain \eqref{shif}.

We  offer another proof of Theorem~\ref{per} by using the result of
Beeson \cite{beeson}, but this case we make use of
Theorem~\ref{multth2}. Since $\tau^\varepsilon_r$ converges to
$\tau$ and $|\Gamma^\varepsilon_r|$ converges to $|\Gamma|$, it
follows that $|\Sigma^\varepsilon_r|$ converges to $|\Sigma|$, and
in view of \eqref{sig} the inequality \eqref{shif} follows
immediately.
\end{proof}

\begin{remark} Theorem~\ref{per} can be considered as an variation of theorem of Shiffman \cite{shiff}. Namely Shiffman in
order to prove the isoperimetric inequality for harmonic surfaces
$\Sigma$ used the assumption that the harmonic parametrization
$\tau$ is a homeomorphism with $\tau|_{\mathbf T}\in \mathrm{BV}$.
Our proof shows that the condition $\tau|_{\mathbf T}\in {BV}$ is
somehow redundant, but we make a topological condition that $\Sigma$
is a Jordan surface.  We decide to present this inequality in this
paper, because it is not well-known. Additional motivation why we
consider this problem comes from the famous Courant book \cite{cou}
(see the proof of \cite[Theorem~3.7]{cou}), which has been published
some years after the paper of Shiffman. Indeed Courant proved for
$n=3$ the inequality
$$4 |\Sigma|\le |\Gamma|^2,$$
under the condition $\Sigma=\tau(\mathbb U)$, where $\tau$ is a
harmonic parametrization with absolutely continuous boundary data.

If we restrict ourselves to regular surfaces with smooth boundaries,
our Theorem~\ref{per} does not bring any new information, because it
is well-known the following fact,  the Riemann surface enjoys the
isoperimetric inequality (in compact smooth Jordan sub-surfaces) if
and only if the Gaussian curvature is non-positive (cf.
\cite{isak,hub,ose}) (This is a theorem of Beckenbach and Rad\'o).
 However we
believe that the Theorem~\ref{per} bring some new light on this
problem.
We strongly believe that the  Theorem~\ref{per} is well-known for
Jordan minimal surfaces and this particular case can be proved
without Theorem~\ref{cont}. Recall that Enneper-Weierstrass
parameterization
$$\tau(z)=(p_1(z),\dots, p_n(z)),\quad  z\in \mathbf U,$$
of a simple-connected minimal surface $\Sigma$ has harmonic
coordinates $p_j(z),\ j=1,\dots,n$ such that $p_j(z) = \Re
(a_j(z))$, where $a_j,\ j=1,\dots,n$ are analytic functions on the
unit disk satisfying the equation $\sum_{j=1}^n (a'_j(z))^2=0.$ 
\end{remark}

\subsection{Riesz-Zygmund inequality} The following is a classical
inequality.
\begin{proposition}[Riesz-Zygmund inequality]\cite[Theorem~6.1.7]{lib}\label{miro}
If $f\in h^1(\mathbb{U})$ is a harmonic function then
$$\int_{-1}^1|\partial_r f(r e^{is})|dr \le \frac 12 \int_{0}^{2\pi}|\partial_t f(e^{it})|dt.$$
The constant $1/2$ is the best possible.
\end{proposition}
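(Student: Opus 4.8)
The plan is to reduce both sides of the inequality to quantities attached to a single boundary measure of $f$, and then to an elementary one‑variable integral of the conjugate Poisson kernel. First, if the right‑hand side is infinite there is nothing to prove, so assume $\sup_{0<r<1}\int_0^{2\pi}|\partial_t f(re^{it})|\,dt<\infty$. By the converse of the computation carried out in the proof of Lemma~\ref{gjat} (together with the fact that a harmonic function on $\mathbb{U}$ depending only on $|z|$ is constant, which fixes an additive constant), this forces $f=P[\phi]$ for some $\phi\colon\mathbb{T}\to\mathbb{R}^n$ of bounded variation, with $\partial_t f$ equal to the Poisson--Stieltjes integral of the $\mathbb{R}^n$‑valued measure $d\phi$, and with $\int_0^{2\pi}|\partial_t f(e^{it})|\,dt=V(\phi)$, the total variation of $\phi$ (this is $|\Gamma|$ in the harmonic‑surface situation; when the boundary datum is not absolutely continuous, read the left side as $\lim_{r\to1}\int_0^{2\pi}|\partial_t f(re^{it})|\,dt$).

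Next I would express the radial derivative of $f$ along a diameter by means of the conjugate Poisson kernel. Put $\Psi(re^{i\theta})=\frac1{2\pi}\frac{1+re^{i\theta}}{1-re^{i\theta}}$, so that $P=\Re\Psi$ and $Q:=\Im\Psi$ is the conjugate Poisson kernel; from $re^{i\theta}\Psi'(re^{i\theta})=r\,\partial_r\Psi=-i\,\partial_\theta\Psi$ one gets $r\,\partial_r P(r,\theta)=\partial_\theta Q(r,\theta)$ by taking real parts. Differentiating $f=P[\phi]$ in $r$, inserting this identity, using $\partial_\theta Q(r,\theta-t)=-\partial_t Q(r,\theta-t)$, and integrating by parts in $t$ — the boundary term $Q(r,\theta)\bigl(\phi(2\pi)-\phi(0)\bigr)$ vanishes after a rotation that makes $\phi$ continuous at $1\in\mathbb{T}$ and affects neither side of the desired inequality — one obtains $r\,\partial_r f(re^{i\theta})=\int_0^{2\pi}Q(r,\theta-t)\,d\phi(t)$. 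Since $Q(r,\theta)=r\,R(r,\theta)$ with
$$R(r,\theta):=\frac{\sin\theta}{\pi\,(1-2r\cos\theta+r^2)},$$
which extends smoothly across $r=0$, dividing by $r$ gives, for every $s$ and every $r\in(-1,1)$,
$$\frac{d}{dr}\,f(re^{is})=\int_0^{2\pi}R(r,s-t)\,d\phi(t).$$

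From here Minkowski's integral inequality for the $\mathbb{R}^n$‑valued measure $d\phi$ (we write $|d\phi|$ for its total variation measure, so $|d\phi|(\mathbb{T})=V(\phi)$), followed by Tonelli's theorem, yields
$$\int_{-1}^{1}\Bigl|\frac{d}{dr}f(re^{is})\Bigr|\,dr\;\le\;\int_0^{2\pi}\Bigl(\int_{-1}^{1}|R(r,s-t)|\,dr\Bigr)\,|d\phi|(t)\;\le\;\Bigl(\sup_{\alpha}\int_{-1}^{1}|R(r,\alpha)|\,dr\Bigr)\,V(\phi),$$
so the whole statement reduces to the scalar estimate $\int_{-1}^{1}|R(r,\alpha)|\,dr\le\frac12$. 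Since $1-2r\cos\alpha+r^2=(r-\cos\alpha)^2+\sin^2\alpha>0$, the absolute value affects only $\sin\alpha$, and for $\sin\alpha\neq0$ an elementary primitive gives
$$\int_{-1}^{1}|R(r,\alpha)|\,dr=\frac1\pi\left(\arctan\frac{1-\cos\alpha}{|\sin\alpha|}+\arctan\frac{1+\cos\alpha}{|\sin\alpha|}\right)=\frac1\pi\cdot\frac\pi2=\frac12,$$
using $\frac{1-\cos\alpha}{|\sin\alpha|}=|\tan(\alpha/2)|$ and $\arctan x+\arctan(1/x)=\pi/2$ for $x>0$; when $\sin\alpha=0$ the integrand is identically zero. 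Feeding $\sup_\alpha\int_{-1}^1|R(r,\alpha)|\,dr=\frac12$ back into the previous display gives precisely $\int_{-1}^{1}|\partial_r f(re^{is})|\,dr\le\frac12\,V(\phi)=\frac12\int_0^{2\pi}|\partial_t f(e^{it})|\,dt$.

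For optimality of $\frac12$, observe that $\int_{-1}^1|R(r,\alpha)|\,dr\le\frac12$ is an equality as soon as $\sin\alpha\neq0$, and that $\operatorname{sign}R(r,\alpha)=\operatorname{sign}\sin\alpha$ for all $r$; hence for $\phi=\chi_E$ with $E$ the semicircular arc of $\mathbb{T}$ centered at $e^{is}$ one gets $\frac{d}{dr}f(re^{is})=\frac{2}{\pi(1+r^2)}$, so $\int_{-1}^1|\partial_r f(re^{is})|\,dr=1=\frac12 V(\phi)$, i.e.\ equality (smoothing $\chi_E$ gives the ratio $\frac12$ with absolutely continuous boundary data). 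The one step with actual content is recognizing that the radial derivative along a diameter is itself a rescaled conjugate‑Poisson--Stieltjes integral of $d\phi$, which converts a two‑variable problem into the scalar estimate $\int_{-1}^1|R(r,\alpha)|\,dr\le\frac12$; the remaining points — the reduction of the right‑hand side to $V(\phi)$ and the evaluation of that integral — are routine, though the latter is exactly what produces the constant $\frac12$.
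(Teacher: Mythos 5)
Your proof is correct and follows essentially the same route the paper itself uses: the paper states Proposition~\ref{miro} without proof (citing Pavlovi\'c) but proves its extension, Theorem~\ref{para}, by exactly your computation --- representing $r\,\partial_r f$ as a conjugate-Poisson convolution with $\partial_t f$ (your kernel $R$ is $\tfrac{1}{2\pi r}\Im F$ with $F(z)=2z/(1-z)$, and your scalar estimate $\int_{-1}^{1}|R(r,\alpha)|\,dr=\tfrac12$ is the paper's identity \eqref{fub}), followed by Fubini/Tonelli. The only differences are cosmetic or additive: you work with the Stieltjes measure $d\phi$ rather than an $L^1$ boundary derivative, and you supply the sharpness example, which the paper asserts but does not verify.
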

As a corollary we have the next inequality.
\begin{corollary} Assume that $f$ is a harmonic diffeomorphism from
unit disc $\mathbb{U}$ onto a Jordan domain $\Omega$ with the
rectifiable boundary $\Gamma$ and let $d$ be an arbitrary diameter
of $\mathbb{U}$. Then, if by $|\cdot|$ we denote the corresponding
length, we have
$$ |f(d)| \le  |\Gamma |/2 .$$
\end{corollary}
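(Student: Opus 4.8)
The plan is to deduce the corollary directly from the Riesz--Zygmund inequality (Proposition~\ref{miro}) applied to each coordinate of $f$, combined with the fact, established in Theorem~\ref{multth2}, that $\partial_t f\in h^1(\mathbb{U})$ whenever $\Gamma$ is rectifiable. First I would reduce to the case where the diameter $d$ is the segment $[-1,1]\subset\mathbb{U}$; the general case follows by precomposing $f$ with a rotation $z\mapsto e^{i\alpha}z$ of the disk, which is harmonic-preserving and does not change $\Gamma$ or $|\Gamma|$. So it suffices to bound $|f([-1,1])|=\int_{-1}^1|\partial_r f(r)|\,dr$.

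Next, since $f=(f^1,\dots,f^n)$ with each $f^j$ a harmonic function, I would like to apply Proposition~\ref{miro} to estimate $\int_{-1}^1|\partial_r f^j(r)|\,dr\le \tfrac12\int_0^{2\pi}|\partial_t f^j(e^{it})|\,dt$ for each $j$, but the Euclidean norm $|\partial_r f|$ is not the sum of the $|\partial_r f^j|$, so a naive coordinatewise application loses a dimensional constant. The clean fix is to run the proof of Riesz--Zygmund itself in vector form: writing $\partial_t f(re^{it})=\int_0^{2\pi}P(r,t-\tau)\,dF(\tau)$ as in the proof of Lemma~\ref{gjat} (valid since $f=P[F]$ with $F\in BV$ along $\Gamma$ by Theorem~\ref{cont}), and similarly expressing $\partial_r f$ as a Poisson--Stieltjes integral against the conjugate-type kernel $\partial_r P(r,t-\tau)$, one obtains
\begin{equation*}
\int_{-1}^{1}|\partial_r f(r)|\,dr\le \int_0^{2\pi}\left(\int_{-1}^{1}|\partial_r P(r,-\tau)|\,dr\right)|dF(\tau)|,
\end{equation*}
and the scalar Riesz--Zygmund constant tells us precisely that $\int_{-1}^{1}|\partial_r P(r,-\tau)|\,dr\le \tfrac12\cdot(\text{const})$ uniformly in $\tau$; tracking the normalization so that $\int_0^{2\pi}|dF(\tau)|=|\Gamma|$ gives $|f([-1,1])|\le |\Gamma|/2$. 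Alternatively, and more economically, one may simply invoke Proposition~\ref{miro} applied to the single harmonic function $g=\langle v,f\rangle$ for an arbitrary unit vector $v\in\mathbb{R}^n$: then $|g([-1,1])|\le\tfrac12\int_0^{2\pi}|\partial_t g(e^{it})|\,dt\le\tfrac12\int_0^{2\pi}|\partial_t f(e^{it})|\,dt=\tfrac12|\Gamma|$ by Theorem~\ref{multth2}, and since the length of the projection of the curve $f([-1,1])$ onto the line $\mathbb{R}v$ is at most its own length, taking the supremum over $v$ does not immediately help — so instead one uses that $|f(b)-f(a)|=\sup_{|v|=1}|g(b)-g(a)|$ together with a limiting/partition argument on the arc $f([-1,1])$ to get $|f([-1,1])|=\sup_{|v|=1}(\text{length of }g([-1,1]))$. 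Hmm, that sup-swap is not automatic either, so the vector-valued Poisson--Stieltjes route above is the safe one.

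The main obstacle is exactly this passage from the scalar Riesz--Zygmund inequality to its vector-valued analogue with the \emph{same} constant $1/2$: one must verify that the kernel estimate $\int_{-1}^{1}|\partial_r P(r,t)|\,dr\le\tfrac12$ (suitably normalized, after the integration-by-parts that converts $\partial_r f$ into a Poisson--Stieltjes integral against $dF$) holds uniformly in $t$, since that is what underlies the sharp scalar constant and is what survives Minkowski's integral inequality in the vector setting. Granting the kernel bound from \cite[Theorem~6.1.7]{lib} (or reproving it), the rest is the bookkeeping that $\var_{\mathbb{T}}F=|\Gamma|$ for the boundary function $F$ furnished by Theorem~\ref{cont}, together with the reduction to $d=[-1,1]$ by rotation; I expect no further difficulty there.
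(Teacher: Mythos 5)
Your proposal is correct and follows essentially the paper's route: the corollary is presented there as an immediate consequence of Proposition~\ref{miro} (with $\int_0^{2\pi}|\partial_t f(e^{it})|\,dt=|\Gamma|$ supplied by Theorem~\ref{multth2} and the BV boundary function of Theorem~\ref{cont}), and the vector-valued version with the same constant $1/2$ that you single out as the main obstacle is exactly what the paper establishes as Theorem~\ref{para}, by precisely the conjugate-kernel/Fubini--Minkowski argument you call the ``safe'' route, the key kernel identity being \eqref{fub}. So there is no remaining gap; for the planar corollary itself one may also note that \cite[Theorem~6.1.7]{lib} already covers complex-valued harmonic functions.
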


Now we prove the following extension of Proposition \ref{miro}.

\begin{theorem}[Riesz-Zygmund inequality for harmonic surfaces]\label{para}
Assume $\Sigma\subseteq\mathbb{R}^n$ is a harmonic surface spanning
a rectifiable curve $\Gamma$ parametrized by harmonic coordinates.
Then for every $s\in [0,2\pi]$
\begin{equation}\label{ins}
\int_{-1}^1  |\partial_r\tau(re^{is}) | dt \le \frac 12
\int_0^{2\pi} |\partial_t \tau(e^{it})| dt.
\end{equation}
In other words, the length of the image of an arbitrary diameter $d$
of the unit disk under a harmonic parametrization $\tau$ is less
than one half of the perimeter of the surface $\Sigma$.
\end{theorem}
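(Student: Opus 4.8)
The plan is to push everything to the boundary and reduce \eqref{ins} to a single elementary integral. First, by Lemma~\ref{vaz} applied to the harmonic homeomorphism $\tau\colon\mathbb{U}\to\Sigma$, I write $\tau=P[\phi]$ with $\phi\colon\mathbb{T}\to\Gamma$ of bounded variation (continuous off a countable set, with one-sided limits there); and by Theorem~\ref{multth2} together with Lemma~\ref{gjat}, its total variation is $\var\phi=\int_0^{2\pi}|d\phi(t)|=|\Gamma|=\int_0^{2\pi}|\partial_t\tau(e^{it})|\,dt$, the last quantity read as $\lim_{r\to1}\int_0^{2\pi}|\partial_t\tau(re^{it})|\,dt$. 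The left-hand side of \eqref{ins} is precisely the length --- that is, the total variation over $r\in[-1,1]$ --- of the arc $r\mapsto\tau(re^{is})$, so it suffices to bound that length by $\tfrac12\var\phi$.

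The key step is a clean representation of $\partial_r\tau$ along the diameter. I introduce the $\mathbb{C}^n$-valued holomorphic function $F$ on $\mathbb{U}$ with $\Re F=\tau$, given componentwise by the Schwarz integral
\[
F(z)=\frac{1}{2\pi}\int_{\mathbb{T}}\frac{\zeta+z}{\zeta-z}\,\phi(\zeta)\,|d\zeta|,
\]
so that $\partial_r\tau(re^{is})=\Re\bigl[e^{is}F'(re^{is})\bigr]$ for $r\in[-1,1]$ by the chain rule. Differentiating in $z$ and integrating by parts on the circle --- legitimate because $\phi\in\mathrm{BV}$ has one-sided limits everywhere and the kernel is smooth, so no boundary term survives --- I get $F'(z)=-\tfrac{i}{\pi}\int_{\mathbb{T}}\frac{d\phi(\zeta)}{\zeta-z}$. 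Writing $\zeta=e^{it}$ and using $\Re(-iw)=\Im w$ together with $\Im\frac{1}{e^{i\alpha}-r}=-\frac{\sin\alpha}{1-2r\cos\alpha+r^2}$, this collapses to
\[
\partial_r\tau(re^{is})=-\frac1\pi\int_0^{2\pi}\frac{\sin(t-s)}{1-2r\cos(t-s)+r^2}\,d\phi(t),\qquad r\in[-1,1].
\]

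After this the estimate is immediate up to one computation. By the triangle inequality for the $\mathbb{R}^n$-valued measure $d\phi$ and Tonelli's theorem,
\[
\int_{-1}^{1}|\partial_r\tau(re^{is})|\,dr\le\frac1\pi\int_0^{2\pi}\left(\int_{-1}^{1}\frac{|\sin(t-s)|}{1-2r\cos(t-s)+r^2}\,dr\right)|d\phi(t)|,
\]
and the heart of the matter is that for every $t$ with $\sin(t-s)\ne0$ the inner integral equals exactly $\tfrac{\pi}{2}$: completing the square $1-2r\cos\alpha+r^2=(r-\cos\alpha)^2+\sin^2\alpha$, with $\alpha=t-s$ it becomes $\arctan\frac{1-\cos\alpha}{|\sin\alpha|}+\arctan\frac{1+\cos\alpha}{|\sin\alpha|}$, and since the product of the two arguments is $\frac{1-\cos^2\alpha}{\sin^2\alpha}=1$, the identity $\arctan x+\arctan\frac1x=\frac{\pi}{2}$ for $x>0$ gives $\tfrac{\pi}{2}$. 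Therefore
\[
\int_{-1}^{1}|\partial_r\tau(re^{is})|\,dr\le\frac12\int_0^{2\pi}|d\phi(t)|=\frac12\var\phi=\frac12\int_0^{2\pi}|\partial_t\tau(e^{it})|\,dt,
\]
which is \eqref{ins}.

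The hard part --- and the reason the constant $\tfrac12$ comes out exactly --- is to arrange the representation so that the crude triangle inequality is already sharp: writing $\partial_r\tau$ through the measure $d\phi$ and the conjugate (imaginary-part) kernel $\Im\frac{1}{\zeta-r}$ is exactly what produces the kernel whose integral over the full diameter $r\in[-1,1]$ is the universal constant $\pi/2$; no finer oscillation or cancellation argument is needed beyond this bookkeeping. (Specialised to $n=1$, or to $n=2$ with $\tau$ holomorphic, the same argument reproves Proposition~\ref{miro}, constant included.)
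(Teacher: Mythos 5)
Your proof is correct and follows essentially the same route as the paper's: both represent the radial derivative along a diameter as a convolution of the boundary variation with the conjugate (imaginary-part) kernel and then apply Fubini together with the identity $\int_{-1}^{1}\frac{|\sin\alpha|}{1-2r\cos\alpha+r^2}\,dr=\frac{\pi}{2}$. The only cosmetic difference is that you derive this representation from the Schwarz integral of the BV boundary function $\phi$ and compute the kernel integral explicitly, whereas the paper phrases it as ``$r\partial_r\tau$ is the harmonic conjugate of $\partial_t\tau$'' and cites Pavlovi\'c for the same computation in the equivalent form $\int_{-1}^{1}\bigl|r^{-1}\Im F(re^{it})\bigr|\,dr=\pi$ with $F(z)=2z/(1-z)$.
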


\begin{proof} Assume that $\tau$ are harmonic coordinates. Let $\tau=
(\Re(a_1),\dots, \Re (a_n))$, where $a_j,\ j=1,\dots, n$ are
analytic function in the unit disk. Then
$$\partial_t \tau + i r \partial_r \tau = (a_1',a_2',\dots, a_n')\subset \mathbb{C}^n$$
and thus $r \partial_r \tau$ is the harmonic conjugate of
$\partial_t\tau$. It follows that
\begin{equation}\label{pa}r
\partial_r \tau(re^{is})=\frac{1}{2\pi}\int_{-\pi}^\pi (\Im F[r e^{i t}])\partial_t \tau(e^{i (s-t)})dt,
\end{equation}
where $F(z)=2z/(1-z)$. As in the proof of \cite[Theorem~6.1.7]{lib}
we find out that
\begin{equation}\label{fub}
\int_{-1}^1|r^{-1}\Im F(r e^{i t})|dr = \pi
\end{equation}
for $0<|t|<\pi$. By Fubini's theorem, \eqref{pa} and \eqref{fub} we
obtain
\[\begin{split}
\int_{-1}^1 |\partial_r \tau(re^{is}) | dr & \le \frac
1{2\pi}\int_{0}^{2\pi} |\partial_t \tau(e^{it})|dt
\int_{-1}^1|r^{-1}\Im F(r e^{i t})|dr =\frac 12
\int_{0}^{2\pi}|\partial_t \tau(e^{it})|dt.
\end{split}\]
\end{proof}

\begin{remark} It is worth to notice the following important fact.
For a minimal surface $\Sigma$ over a domain in the complex plane,
every isothermal parametrization is a harmonic parametrization and
it coincides with Enneper--Weierstrass parametrization of the
minimal surface.
\end{remark}

Let $\Sigma\subseteq\mathbb{R}^n$ be a regular surface. For two
points $P,\ Q \in \Sigma$ we define the intrinsic distance as
follows
$$d_I(P,Q)=\inf_{c\in \mathfrak C}|c|,$$
where $\mathfrak {C}$ is the set of all Jordan arcs $c$ of $\Sigma$
with the length $|c|$ connecting $P$ and $Q$. It should be noted the
following fact, for close enough points $P$ and $Q$ it exists a
geodesic line $\gamma$ connecting $P$ and $Q$ such that $d_I(P,Q)
=|\gamma|$. We define the (geodesic) diameter of $\Sigma$ as
$$\mathrm{diam}(\Sigma^2)=\sup_{P, Q\in \Sigma}d_I(P,Q).$$

We can now deduce the following geometric application of Theorem
\ref{para}.

\begin{theorem}\label{mai} If $\Sigma\subseteq\mathbb {R}^n$ is an
arbitrary simply connected harmonic surface with rectifiable
boundary $\Gamma$ then:
\begin{equation}\label{dia}
\mathrm{diam}(\Sigma) \le  \frac{1}{2}|\Gamma|.
\end{equation}
The constant $1/2$ is the best possible even for minimal surfaces
lying over the unit disk.
\end{theorem}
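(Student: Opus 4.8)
The plan is to derive \eqref{dia} from the Riesz--Zygmund inequality for harmonic surfaces, Theorem~\ref{para}, by precomposing a harmonic parametrization with a conformal automorphism of the disk. Fix a harmonic homeomorphism $\tau:\mathbb U\to\Sigma$ and two distinct points $P,Q\in\Sigma$, and put $z_P=\tau^{-1}(P)$, $z_Q=\tau^{-1}(Q)\in\mathbb U$. The difficulty is that \eqref{ins} bounds only the length of the $\tau$-image of a \emph{diameter} of $\mathbb U$ (a chord through the origin), whereas $z_P,z_Q$ are arbitrary. To get around this, recall that $\mathrm{Aut}(\mathbb U)$ acts transitively on hyperbolic geodesics: there is a M\"obius transformation $\varphi$ of $\overline{\mathbb U}$ onto itself carrying the real segment $[-1,1]$ onto the complete geodesic through $z_P$ and $z_Q$, so that $\varphi(t_P)=z_P$ and $\varphi(t_Q)=z_Q$ for some $t_P,t_Q\in(-1,1)$ (say $t_P\le t_Q$, relabelling $P,Q$ if needed).

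The key point is that $\tilde\tau:=\tau\circ\varphi$ is again a harmonic homeomorphism of $\mathbb U$ onto $\Sigma$ — harmonicity is preserved under precomposition with the holomorphic map $\varphi$ — and, $\varphi$ extending to a homeomorphism of $\overline{\mathbb U}$, the surface $\tilde\tau(\mathbb U)=\Sigma$ is still spanned by the same curve $\Gamma$; in particular the perimeter of $\Sigma$ appearing on the right-hand side of \eqref{ins} written for $\tilde\tau$ equals $|\Gamma|$ by Theorem~\ref{multth2}. Hence $c:=\tilde\tau([t_P,t_Q])$ is a Jordan arc of $\Sigma$ joining $P$ and $Q$ (being the homeomorphic image of a line segment), and since $\tilde\tau$ is real-analytic in $\mathbb U$ its length is $\int_{t_P}^{t_Q}|\partial_r\tilde\tau(r)|\,dr$. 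Applying Theorem~\ref{para} to $\tilde\tau$ with $s=0$,
\[
|c|=\int_{t_P}^{t_Q}|\partial_r\tilde\tau(r)|\,dr\;\le\;\int_{-1}^{1}|\partial_r\tilde\tau(r)|\,dr\;\le\;\tfrac12\,|\Gamma|,
\]
so $d_I(P,Q)\le|c|\le\tfrac12|\Gamma|$; taking the supremum over $P,Q\in\Sigma$ gives \eqref{dia}. (Here $d_I$ and $\mathrm{diam}(\Sigma)$ are read as infima of lengths of Jordan arcs in $\Sigma$, which makes sense whether or not $\Sigma$ carries branch points.)

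For the sharpness of the constant $1/2$, I would test the inequality on the tilted flat disks $\Sigma_M=\{(x,y,Mx):x^2+y^2<1\}\subset\mathbb R^3$, $M>0$, which are graphs over the unit disk, are trivially minimal, and are parametrized harmonically (indeed linearly) by $\tau_M(z)=(\Re z,\Im z,M\Re z)$. A short computation gives $\mathrm{diam}(\Sigma_M)=2\sqrt{1+M^2}$ (this supremum being approached along the real diameter) and $|\Gamma_M|=\int_0^{2\pi}\sqrt{1+M^2\sin^2 t}\,dt$, so $2\,\mathrm{diam}(\Sigma_M)/|\Gamma_M|\to1$ as $M\to\infty$; thus $1/2$ cannot be replaced by any smaller constant, even for minimal surfaces lying over $\mathbb U$.

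The main obstacle is conceptual rather than computational: the passage from ``image of a diameter'' in \eqref{ins} to ``image of an arbitrary geodesic'', i.e.\ verifying that $\tau\circ\varphi$ is still a harmonic parametrization spanning the \emph{same} curve $\Gamma$ and that $\tilde\tau([t_P,t_Q])$ is a bona fide rectifiable Jordan arc on $\Sigma$ eligible as a competitor in the definition of $d_I(P,Q)$. Once these are in place, \eqref{dia} is immediate from Theorem~\ref{para}.
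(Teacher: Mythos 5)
Your proposal is correct and follows essentially the same route as the paper: both reduce \eqref{dia} to Theorem~\ref{para} by precomposing $\tau$ with a conformal automorphism of $\mathbb{U}$ that places the preimages of $P$ and $Q$ on a diameter, and both establish sharpness with a steeply tilted flat disk over $\mathbb{U}$ (your $z=Mx$ is a rotation of the paper's $z=m(x+y)$). The only cosmetic difference is that the paper additionally dilates by a factor $\delta<1$ and invokes \eqref{impo} to control the boundary term, where you instead identify $\int_0^{2\pi}|\partial_t\tilde\tau(e^{it})|\,dt$ with $|\Gamma|$ directly via Theorem~\ref{multth2}.
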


\begin{proof} Without lost of generality, let $\tau : \mathbb{U}\to
\Sigma$ be regular harmonic parametrization of the surface $\Sigma$
(if not we can perturb surface in $\mathbb{R}^{n-2}$ as in the proof
of isoperimetric inequality). Let $P,\ Q\in \Sigma$. Then there
exist a conformal mapping $a$ of the unit disk $\mathbb{U}$ onto
itself such that $\tau(a(-x))=P$ and $\tau(a(x))=Q,\ 0<x\le 1$. Take
$\upsilon_\delta(z)=\tau\circ a(\delta z),\ x<\delta<1$. Then by
Theorem \ref{para} and relation \eqref{impo} we have
$$d_I(P,Q)\le \int_{-1}^1|\partial_r \upsilon_\delta(r)| dr
< \frac 12 \int_{0}^{2\pi}|\partial_t \upsilon_\delta(e^{it})|dt \le
\frac 12 |\gamma|.$$ By $d_I(P,Q) < |\gamma|/2$ we obtain
\eqref{dia}.

Show that the constant $1/2$ is sharp. Assume, as we may that $n=3$.
Let $d=[-e^{it},e^{it}]$ be an arbitrary diameter of the unit disk
and let
$$\tau(x,y)=(x,y,m(x+y))$$ where $m$ is a large constant. We can
express the perimeter of the minimal surface $\tau$ by Elliptic
integral of the second kind $E$ i.e.
$$|\gamma|=2 (E[\pi/4, -2 m^2] + E[(3 \pi)/4, -2
m^2]).$$ The length of $\tau(d)$ is $2 \sqrt{1 + m^2 + m^2 \sin{2
t}}$. The maximal diameter is attained for $t=\pi/4$ and is equal
$2\sqrt{1+2m^2}$. Then
$$\lim_{m\to\infty}\frac{2\sqrt{1+2m^2}}{2 (E[\pi/4, -2 m^2] + E[3
\pi/4, -2 m^2])}=\frac12.$$
\end{proof}

\end{document}